\documentclass[11pt]{amsart}
\usepackage{a4wide,color}%,refcheck}%hyperref}

\textheight=632pt
\topmargin-15pt

\usepackage{amssymb}
\usepackage[leqno]{amsmath}
\usepackage{amsfonts}
\usepackage{amsopn}
\usepackage{amstext}
\usepackage{amsthm}
\usepackage{eucal}

\usepackage{tikz}
\usetikzlibrary{cd}
\usepackage[all]{xy}
\newdir{ >}{{}*!/-9pt/@{>}}

\newcommand{\conv}{\operatorname{conv}}

\newcommand{\Gurarii}{Gurari\u\i{ }}

\newcommand{\G}{\mathbb G}
\newcommand{\IG}{\mathbb G}

\newcommand{\IR}{\mathbb R}
\newcommand{\IQ}{\mathbb Q}

\newcommand{\Id}{\mathrm{Id}}
\newcommand{\Iso}{\mathrm{Iso}}

\newcommand{\U}{\mathcal U}

\newcommand{\F}{\mathcal F}
\newcommand{\w}{\omega}
\newcommand{\eps}{\varepsilon}
\newcommand{\IN}{\mathbb N}

\newcommand{\e}{\varepsilon}
\newcommand{\Ra}{\Rightarrow}

\newtheorem{theorem}{Theorem}
\newtheorem{corollary}{Corollary}
\newtheorem{lemma}{Lemma}
\newtheorem{claim}{Claim}

\newtheorem{proposition}{Proposition}

\theoremstyle{definition}
\newtheorem{remark}{Remark}

\title{Gurari\u\i\ operators are generic}
\author[ T.~Banakh, J.~Garbuli\'nska-W\c egrzyn]{ Taras Banakh$^{1,2}$ and  Joanna Garbuli\'nska-W\c egrzyn$^{2}$}
\address{$^{1}$Ivan Franko National University of Lviv, Ukraine}
\email{t.o.banakh@gmail.com}
\address{$^{2}$Jan Kochanowski University, Kielce, Poland}
\email{jgarbulinska@ujk.edu.pl}
\subjclass{47A05; 47A65; 46B04; 46B28; 54B52; 54H05; 54H15}
\keywords{ \Gurarii\ space, universal operator}
\begin{document}

\begin{abstract} 
Answering a question of Garbuli\'nska-W\c egrzyn and Kubi\'s, we prove that \Gurarii operators form a dense $G_\delta$-set in the space $\mathcal B(\IG)$ of all nonexpansive operators on the \Gurarii space $\G$, endowed with the strong operator topology. This implies that  universal operators on  $\G$ form a residual set in $\mathcal B(\G)$.
\end{abstract}
\maketitle

\section{Introduction}
All {\em Banach spaces} considered in this paper are real and separable, all {\em operators} are linear bounded operators between separable Banach spaces. An {\em isometric embedding} (resp. {\em $\eps$-isometric embedding} for some $\eps>0$) of Banach spaces is any injective operator $T:X\to Y$ such that $\|T(x)\|=\|x\|$ (resp. $(1-\eps)\|x\|<\|T(x)\|< (1+\eps)\|x\|$) for any nonzero $x\in X$. An operator $T:X\to Y$ is called {\em nonexpansive} if $\|T\|\le 1$.

A Banach space $X$ is called {\em \Gurarii} if for any finite-dimensional Banach spaces $A\subseteq B$ and any $\eps>0$, any isometric embedding $i:B\to X$ extends to an $\eps$-isometric embedding $\bar f:A\to X$. The first example of a \Gurarii space $\G$ was constructed by \Gurarii in \cite{gurarii}. In \cite{lusky} Lusky proved that any two \Gurarii spaces are isometrically isomorphic, so up to an isometry there exists a unique \Gurarii space $\G$. A simple  proof of the uniqueness of the \Gurarii space was found by Kubi\'s and Solecki \cite{KS}. In \cite{Kubis}, Kubi\'s suggested an elementary game-theoretic contruction of the \Gurarii space. It is known \cite{gevorkian} that the \Gurarii space is {\em universal} in the sense that it contains an isometric copy of any separable Banach space. 

An operator $U:V\to W$ between Banach spaces is defined to be \emph{universal} if for every  operator $T:X\to Y$ with  $\|T\| \leq \| U\|$, there exist linear isometric embeddings $ i: X\to V$, $j: Y\to W$
such that the diagram
$$\begin{tikzcd}
	V \ar[rr, "U"] & & W \\
	X \ar[rr, "T"']\ar[u, hook, "i"] & & Y \ar[u, hook, "j"']
\end{tikzcd}$$
is commutative, that is, $U \circ  i = j \circ T$.

In \cite{GK}, \cite{GK2} Garbuli\'nska--W\c egrzyn and Kubi\'s constructed a universal operator $\Omega:\G\to\G$ using the technique of Fraisee limits. More precisely, they defined the notion of a \Gurarii operator (which is an operator counterpart of the notion of a \Gurarii space), constructed a \Gurarii operator (as the Fraisse limit in a suitable category) and proved that every \Gurarii operator is universal.

An operator $G:X\to Y$ between Banach spaces is called {\em \Gurarii\!} if $G$ is nonexpansive and for any $\eps>0$, any nonexpansive operator $T:A\to B$ between finite-dimensional Banach spaces, any Banach subspaces $A_0\subseteq A$, $B_0\subseteq B$ with $T[A_0]\subseteq B_0$, and any isometric embeddings   $i_0:A_0\subseteq X$, $j_0:B_0\subseteq Y$ with $G\circ i_0=j_0\circ T{\restriction}_{A_0}$, there exist $\e$-isometric embeddings $i:A\to X$ and $j:B\to Y$ such that $i{\restriction}_{A_0}=i_0$, $j{\restriction}_{B_0}=j_0$ and $G\circ i=j\circ T$.

An example of a \Gurarii operator $\Omega:\G\to \G$ was constructed in \cite{GK}. By \cite[Theorem 3.5]{GK}, any \Gurarii operator $G:X\to Y$ is isometric to the \Gurarii operator $\Omega:\G\to\G$ in the sense that there exist bijective isometries $i:X\to\G$ and $j:Y\to\G$ such that $\Omega\circ i=j\circ G$. Therefore, a \Gurarii operator is unique up to an isometry (like the \Gurarii space). By \cite[Theorem 3.3]{GK}, every \Gurarii operator is universal. Therefore, the set $\mathcal G(\G)$ of \Gurarii operators from a \Gurarii space $\G$ to itself is a subset of the set $\U(\IG)$  of nonexpansive universal operators from $\G$ to $\G$. In its turn, $\U(\IG)$ is a subset of the space $\mathcal B(\IG)$ of all nonexpansive operators from $\G$ to $\G$. The space $\mathcal B(\IG)$ is endowed with the strong operator topology. Let us recall that the {\em strong operator topology} on $\mathcal B(\IG)$ is the smallest topology such that for every $x\in \G$ the map $\delta_x:\mathcal B(\G)\to\IG$, $\delta_x:T\mapsto T(x)$, is continuous.

In \cite[Question 1]{GK2} Garbuli\'nska-W\c egrzyn and Kubi\'s asked whether the set $\mathcal G(\G)$ of \Gurarii operators is residual in the space $\mathcal B(\IG)$. Let us recall that a subset $R$ of a topological space $X$ is {\em residual} if it contains a dense $G_\delta$-subset of $X$.

In this paper we answer Question 1 of \cite{GK2} affirmatively proving the following main result.

\begin{theorem}\label{t:main} The set $\mathcal G(\G)$ of \Gurarii operators is dense $G_\delta$ in the space $\mathcal B(\IG)$ of all nonexpansive operators on $\IG$. 
\end{theorem}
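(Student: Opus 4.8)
The plan is to establish separately that $\mathcal G(\G)$ is a $G_\delta$ subset of $\mathcal B(\IG)$ and that it is dense; together these give the theorem. It is convenient to note first that $\mathcal B(\IG)$ with the strong operator topology is a Polish space: fixing a countable dense $\IQ$-linear subspace $V\subseteq\G$, the restriction map $T\mapsto T{\restriction}_V$ identifies $\mathcal B(\IG)$ together with its strong operator topology with the set of all $\IQ$-linear nonexpansive maps $V\to\G$, a closed subset of the Polish space $\prod_{v\in V}\{y\in\G:\|y\|\le\|v\|\}$; since each such map is $1$-Lipschitz it extends uniquely to a member of $\mathcal B(\IG)$. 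In particular every dense $G_\delta$ subset of $\mathcal B(\IG)$ is residual, which yields the consequence for universal operators announced in the abstract. Throughout we fix an increasing chain $E_1\subseteq E_2\subseteq\cdots$ of finite-dimensional subspaces of $\G$ with dense union, and we use freely the universality of the \Gurarii space $\G$ and its approximate homogeneity: any isometric, and more generally any $\e$-isometric, embedding of a finite-dimensional space into $\G$ can be perturbed, extended, and moved into the chain $(E_n)$ up to an arbitrarily small error.

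For the $G_\delta$ part the crucial step is a reformulation of the defining property. Call $G\in\mathcal B(\IG)$ \emph{robustly extending} if for every $\e>0$, every rational finite-dimensional configuration $(A_0\subseteq A,\,B_0\subseteq B,\,T)$ with $T:A\to B$ a rational nonexpansive operator and $T[A_0]\subseteq B_0$, every $n\in\IN$, and every pair of $\e$-isometric embeddings $i_0:A_0\to E_n$, $j_0:B_0\to E_n$ with $\|G\circ i_0-j_0\circ T{\restriction}_{A_0}\|<\e$, there exist $\e$-isometric embeddings $i:A\to\G$ and $j:B\to\G$ with $\|i{\restriction}_{A_0}-i_0\|<\e$, $\|j{\restriction}_{B_0}-j_0\|<\e$ and $\|G\circ i-j\circ T\|<\e$. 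The main lemma is that $G$ is robustly extending if and only if $G$ is a \Gurarii operator. The nontrivial implication, from robustly extending to \Gurarii, is obtained by the usual technique of successive corrections inside $\G$: an arbitrary finite-dimensional configuration together with isometric embeddings $i_0,j_0$ into $\G$ exactly commuting with $G$ is first approximated by a rational configuration with $\e$-isometric embeddings into some $E_n$, the robust property is applied, and the resulting almost-commuting, almost-extending pair is straightened to an exact one by iterating at finer and finer scales and passing to the limit; the reverse implication rests on the same perturbation lemmas applied to the space $\G$. Granting the reformulation, fix a rational configuration, $\e\in\IQ^{+}$ and $n$; the displayed condition then involves only finitely many vectors (the images of $i_0,j_0$ lie in $E_n$, and the witnesses $i,j$ may be taken inside a suitable $E_N$), only strict inequalities, and a relatively compact set of pairs $(i_0,j_0)$ to be tested, so that, after adjusting the thresholds so as to leave room for perturbing $G$, it defines a strong-operator-open subset of $\mathcal B(\IG)$. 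Intersecting these open sets over the countably many rational configurations, over $\e\in\IQ^{+}$ and over $n$ exhibits $\mathcal G(\G)$ as a $G_\delta$ set.

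For density, fix $G_0\in\mathcal B(\IG)$, vectors $x_1,\dots,x_k\in\G$ and $\delta>0$; we must produce a \Gurarii operator $G$ with $\|G(x_i)-G_0(x_i)\|<\delta$ for $i\le k$. Put $X_0:=\operatorname{span}\{x_1,\dots,x_k\}\subseteq\G$, $Y_0:=G_0[X_0]\subseteq\G$ and $g_0:=G_0{\restriction}_{X_0}:X_0\to Y_0$, a nonexpansive operator between finite-dimensional subspaces of $\G$. Starting from this seed we run the \fra-type construction underlying the existence of the \Gurarii operator in \cite{GK}: recursively we build increasing finite-dimensional subspaces $X_0\subseteq X_1\subseteq\cdots$ and $Y_0\subseteq Y_1\subseteq\cdots$ of $\G$ and nonexpansive operators $g_m:X_m\to Y_m$, with $g_{m+1}$ extending $g_m$ up to an error $<\e_m$, interleaving tasks that (i) put $\bigcup_m X_m$ and $\bigcup_m Y_m$ densely in $\G$, and (ii) for each rational configuration $(A_0\subseteq A,B_0\subseteq B,T)$ and each way of embedding $A_0,B_0$ into the current $X_m,Y_m$ compatibly with $g_m$, amalgamate so that $X_{m+1}$ contains a copy of $A$ extending that of $A_0$, $Y_{m+1}$ contains a copy of $B$ extending that of $B_0$, and $g_{m+1}$ commutes with $T$. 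Each step uses the amalgamation property of the category of finite-dimensional operator-configurations from \cite{GK} to form the abstract amalgam, and then the universality and approximate homogeneity of $\G$ to realise it $\e_m$-isometrically inside $\G\times\G$; with $\sum_m\e_m$ small enough the operators $g_m$ converge to a nonexpansive operator $G:\G\to\G$. By the reformulation of the previous paragraph it suffices to verify the extension property against rational configurations, and a bookkeeping over the countably many tasks guarantees each is handled in the limit, so $G\in\mathcal G(\G)$; moreover $\|G(x_i)-G_0(x_i)\|<\delta$ for all $i\le k$ provided $\sum_m\e_m$ is chosen accordingly. Hence $\mathcal G(\G)$ meets every nonempty basic open subset of $\mathcal B(\IG)$.

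The step I expect to be the main obstacle is the equivalence between being robustly extending and being a \Gurarii operator, i.e.\ the passage from the approximate extension property, with countably many rational parameters and with all data only approximately matching, to the exact extension property for arbitrary finite-dimensional configurations and arbitrary isometric embeddings into $\G$. This is where the homogeneity of the \Gurarii space must be exploited quantitatively, through a chain of correction lemmas, and where the reduction from an uncountable to a countable amount of data is effected; by contrast the strong-operator-openness of the individual sets and the convergence of the \fra construction are routine bookkeeping.
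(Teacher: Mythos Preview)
Your overall plan---show $G_\delta$ via a countable reformulation, then show density---matches the paper's, and you correctly locate the main difficulty in the reformulation step. But the specific reformulation you propose has a gap that the paper's version avoids.

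In your ``robustly extending'' condition the input tolerance and the output tolerance are the \emph{same} $\e$, and you still quantify universally over all $\e$-isometric pairs $(i_0,j_0)$ with range in $E_n$. Two problems follow. First, this universal quantifier ranges over an uncountable (if relatively compact) set, and for each fixed $(i_0,j_0)$ the condition on $G$ is of the form ``closed $\cup$ open''; an uncountable intersection of such sets is not obviously $G_\delta$, and your remark about ``adjusting the thresholds'' does not explain why the whole clause is open in $G$. Second, and more seriously, the equivalence you need (\Gurarii $\Rightarrow$ robustly extending) is exactly where the paper does real work: it proves (Theorem~\ref{t:mainB}) that a \Gurarii operator satisfies the approximate extension property only when the input data are $\delta$-isometric and $\delta$-commuting for a specific $\delta=\delta(\e,\dim X_0,\dim Y_0,\|T_0^\leftarrow\|)$, typically much smaller than $\e$. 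The dependence on the dimensions and on the openness norm $\|T_0^\leftarrow\|$ is not an artefact; it enters through John's theorem and an orthogonal-complement argument (Lemmas~\ref{l:Hilbert-op}--\ref{l:Hilbert-ort}) used to straighten $G\circ i_0$ to $j_0\circ T_0$ exactly. Your ``successive corrections'' sketch does not supply this quantitative input, and with the same $\e$ on both sides it is not clear the implication even holds. The paper then absorbs $(i_0,j_0)$ into the \emph{countable} data by requiring $i_0,j_0$ to send basis vectors into a fixed countable dense set $D\subseteq\G$ (Theorem~\ref{t:GC}); each resulting condition on $G$ is visibly (closed)$\cup$(open), and the countable intersection gives the $G_\delta$.

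For density your Fra\"iss\'e-from-a-seed construction would work, but the paper takes a much shorter route: given $T\in\mathcal B(\G)$ and a finite-dimensional $E\subseteq\G$, use the universality of the already-constructed \Gurarii operator $\Omega$ to find isometric embeddings $i:E\to\G$, $j:T[E]\to\G$ with $\Omega\circ i=j\circ T{\restriction}_E$, then invoke the Kubi\'s--Solecki homogeneity lemma (Lemma~\ref{l:KS}) to replace $i,j$ by global isometries $I,J$, so that $S=J^{-1}\circ\Omega\circ I$ is a \Gurarii operator close to $T$ on $E$. This avoids rerunning the amalgamation construction altogether.
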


Since $\mathcal G(\IG)\subseteq\U(\IG)$, Theorem~\ref{t:main} implies the following corollary.

\begin{corollary} The set $\U(\IG)$ of universal nonexpansive operators on $\IG$ is residual in the space $\mathcal B(\IG)$ of all nonexpansive operators on $\IG$.
\end{corollary}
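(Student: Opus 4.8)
The plan is to derive the Corollary directly from Theorem~\ref{t:main}, using only the already-established inclusion $\mathcal G(\IG)\subseteq\U(\IG)$ together with the general order-theoretic fact that any set containing a residual set is itself residual. First I would recall the definition: a subset $R\subseteq\mathcal B(\IG)$ is residual if it contains a dense $G_\delta$-subset of $\mathcal B(\IG)$. By Theorem~\ref{t:main}, the set $\mathcal G(\IG)$ is a dense $G_\delta$-subset of $\mathcal B(\IG)$. Hence $\mathcal G(\IG)$ is in particular a dense $G_\delta$-subset contained in itself, so $\mathcal G(\IG)$ is residual.

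The single substantive step is the inclusion $\mathcal G(\IG)\subseteq\U(\IG)$, which I would not reprove: it is exactly the content of \cite[Theorem 3.3]{GK} as recorded in the introduction, asserting that every \Gurarii operator is universal. Given this inclusion, the dense $G_\delta$-set $\mathcal G(\IG)$ witnessing the residuality of $\mathcal G(\IG)$ also witnesses the residuality of the larger set $\U(\IG)$: explicitly, $\mathcal G(\IG)$ is a dense $G_\delta$-subset of $\mathcal B(\IG)$ with $\mathcal G(\IG)\subseteq\U(\IG)$, so $\U(\IG)$ contains a dense $G_\delta$-subset of $\mathcal B(\IG)$ and is therefore residual by definition.

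There is essentially no obstacle here, since the Corollary is a formal consequence of the main theorem and a cited inclusion. The only thing to be careful about is the logical direction of the word \emph{residual}: one must use that enlarging a set preserves the property of containing a dense $G_\delta$-set (a superset of a residual set is residual), rather than the reverse. In summary, the proof is the one-line chain: $\mathcal G(\IG)$ is dense $G_\delta$ in $\mathcal B(\IG)$ by Theorem~\ref{t:main}, $\mathcal G(\IG)\subseteq\U(\IG)$ by \cite[Theorem 3.3]{GK}, and therefore $\U(\IG)$ is residual in $\mathcal B(\IG)$.
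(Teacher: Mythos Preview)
Your proposal is correct and matches the paper's own reasoning exactly: the paper derives the corollary in one line from Theorem~\ref{t:main} via the inclusion $\mathcal G(\IG)\subseteq\U(\IG)$ (citing \cite[Theorem~3.3]{GK}), which is precisely what you do.
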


\begin{remark} Observe that the space $\mathcal B(\G)$ is a topological semigroup with respect to the operation of composition of operators. The group $\Iso(\G)$ of linear isometries of $\G$ is Polish \cite[9.3]{Kechris} and hence is a $G_\delta$-subgroup of the topological semigroup $\mathcal B(\G)$. By \cite[3.5]{GK}, any \Gurarii operators are isometric, which implies the set $\mathcal G(\G)$ coincides with the orbit $\{I\circ \Omega\circ J:I,J\in\Iso(\G)\}$ of any \Gurarii operator $\Omega:\G\to\G$ under the two-sided action of the Polish group $\Iso(\G)\times\Iso(\G)$ on $\mathcal B(\G)$, and this orbit is a unique dense $G_\delta$-orbit of this action. Theorem~\ref{t:main} also shows that \Gurarii operators are ``generic'' elements of the space $\mathcal B(\IG)$ in the Baire category sense.
\end{remark}

\section{Preliminaries}

By $\IR_+$ we denote the half-line $[0,\infty)$. Each natural number $n$ is identified with the set $\{0,\dots,n-1\}$. For a function $f:X\to Y$ between sets and a subset $A\subseteq X$ we denote by $f[A]$ the image $\{f(x):x\in A\}$ of the set $A$ under the map $f$.

For a Banach space $X$, point $x\in X$ and positive real number $r$, let
$$B_X(x,r)=\{y\in X:\|x-y\|<r\}$$be the open ball of radius $r$ with center $x$ in the Banach space $X$. If $x=0$, then we shall write $B_X(r)$ instead of $B_X(0,r)$.

For an operator $T:X\to Y$ between Banach spaces, let
$$
\begin{aligned}
&\|T\|=\inf\{r\in\IR_+:T[B_X(1)]\subseteq B_Y(r)\}\quad\mbox{and}\\
&\|T^{\leftarrow}\|=\inf\big(\{r\in\IR_+:T[X]\cap B_Y(1)\subseteq T[B_X(r)]\}\cup\{\infty\}\big).
\end{aligned}
$$
The (finite or infinite) number $\|T^\leftarrow\|$ is called the {\em openness norm} of $T$. Observe that an operator $T:X\to Y$ between Banach spaces has finite openness norm $\|T^\leftarrow\|$ if and only if the operator $T:X\to T[X]$ is open if and only if $T[X]$ is a closed subspace of $Y$. In particular, for any operator $T:X\to Y$ from a finite-dimensional Banach space $X$, the openness norm $\|T^{\leftarrow}\|$ is finite. For a bijective operator $T:X\to Y$ the openness norm $\|T^{\leftarrow}\|$ is equal to the norm $\|T^{-1}\|$ of the inverse operator $T^{-1}:Y\to X$.

For a non-negative integer $n$ and a real number $p\in[1,\infty)$, let $\ell^n_p$ be the Banach space $\IR^n$ endowed with the $\ell_p$-norm
$$\|x\|=\Big(\sum_{i\in n}|x_i|^p\Big)^{\frac1p}\quad\mbox{for any $x=(x_i)_{i\in n}\in \IR^n$.}$$ 
A Banach space is {\em Hilbert} if its norm is generated by an inner product $\langle\cdot,\cdot\rangle$ in the sence that $\|x\|=\sqrt{\langle x,x\rangle}$ for all $x\in X$. In particular, for every $n\in\IN$ the Banach space $\ell^n_2$ is Hilbert: its norm is generated by the inner product $$\langle x,y\rangle=\sum_{i\in n} x_iy_i.$$
It is well-known that the inner product $\langle\cdot,\cdot\rangle$ of a Hilbert space can be uniquely recovered from the norm by the formula
$$\langle x,y\rangle=\tfrac14(\|x+y\|^2-\|x-y\|^2).$$
For a subset $A\subseteq X$ of  a Hilbert space $X$ we denote by
$$A^\perp=\{x\in X:\forall a\in A\;\;\langle x,a\rangle=0\}$$the {\em orthogonal complement} of $A$ in $X$.  

For an operator $T:X\to Y$ between Banach spaces, $\ker(T)$ stands for the {\em kernel} $T^{-1}(0)$ of $T$.

\begin{lemma}\label{l:Hilbert-op} Let $T:X\to Y$ be an operator from a Hilbert space $X$ to a Banach space $Y$ and $K=\ker(T)$ be the kernel of $T$. Then $T[B_X(1)]=T[B_X(1)\cap K^\perp]$.
\end{lemma}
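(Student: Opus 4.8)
\emph{Proof proposal.} The inclusion $T[B_X(1)\cap K^\perp]\subseteq T[B_X(1)]$ is trivial, since $B_X(1)\cap K^\perp\subseteq B_X(1)$; so the whole content is the reverse inclusion $T[B_X(1)]\subseteq T[B_X(1)\cap K^\perp]$. The plan is to exploit the orthogonal structure of the Hilbert space $X$: because $T$ is a (bounded) operator, its kernel $K=\ker(T)$ is a closed linear subspace of $X$, so $X$ admits the orthogonal decomposition $X=K\oplus K^\perp$, and every $x\in X$ can be written uniquely as $x=k+h$ with $k\in K$ and $h\in K^\perp$.

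Given such a decomposition, I would argue as follows. Fix any point $y\in T[B_X(1)]$ and choose $x\in B_X(1)$ with $T(x)=y$. Write $x=k+h$ with $k\in K$, $h\in K^\perp$. Since $T(k)=0$, linearity of $T$ gives $T(h)=T(x)-T(k)=T(x)=y$. On the other hand, the orthogonality of $k$ and $h$ together with the Pythagorean identity yields $\|x\|^2=\|k\|^2+\|h\|^2\ge\|h\|^2$, hence $\|h\|\le\|x\|<1$. Therefore $h\in B_X(1)\cap K^\perp$ and $y=T(h)\in T[B_X(1)\cap K^\perp]$, which proves $T[B_X(1)]\subseteq T[B_X(1)\cap K^\perp]$ and completes the argument.

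There is essentially no serious obstacle here; the only point that requires a word of justification is that the orthogonal complement decomposition $X=K\oplus K^\perp$ is available, which follows from the fact that $K=\ker(T)$ is closed (as $T$ is continuous) and $X$ is Hilbert. Everything else is the Pythagorean theorem plus linearity of $T$.
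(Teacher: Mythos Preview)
Your proof is correct and essentially identical to the paper's: both take $y=T(x)$ with $\|x\|<1$, project $x$ onto $K$ to write $x=k+h$ with $h\in K^\perp$, observe $T(h)=y$, and use the Pythagorean identity to get $\|h\|\le\|x\|<1$. The only cosmetic difference is that the paper names the projection $x'$ and works with $x-x'$ instead of $h$.
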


\begin{proof} The inclusion $T[B_X(1)\cap K^\perp]\subseteq T[B_X(1)]$ is trivial. To prove that $T[B_X(1)]\subseteq T[B_X(1)\cap K^\perp]$, fix any element $y\in T[B_X(1)]$ and find $x\in B_X(1)$ with $T(x)=y$. Let $x'$ be the orthogonal projection of $x$ onto the subspace $K$. Then $x-x'\in K^\perp$ and $T(x-x')=T(x)-T(x')=T(x)-0=y$. Since $x=x'+(x-x')$ and $\langle x',x-x'\rangle=0$ in the Hilbert space $X$, we have {$\|x-x'\|^2=\|x\|^2-\|x'\|^2\le \|x\|^2<1$} by the Pitagoras Theorem. Therefore,  $y=T(x-x')\in T[B_X(1)\cap K^\perp]$.
\end{proof}

\begin{lemma}\label{l:Hilbert-ort} Let $X$ be a Hilbert space of finite dimension $n>0$, and $T:X\to Y$ be an operator to a Banach space $Y$. Then 
$$\|T\|\le\sqrt{n}\max_{1\le i\le n}\|T(e_i)\|$$for any orthonormal basis $e_1,\dots,e_n$ in $X$.
\end{lemma}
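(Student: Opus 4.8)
The plan is to expand an arbitrary vector of the unit ball of $X$ in the given orthonormal basis and estimate the norm of its image by the triangle inequality, controlling the sum of the absolute values of the coordinates via the Cauchy--Schwarz inequality.

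Concretely, fix an orthonormal basis $e_1,\dots,e_n$ of $X$ and an arbitrary $x\in B_X(1)$. Writing $x_i=\langle x,e_i\rangle$ we have $x=\sum_{i=1}^n x_ie_i$ and, since the basis is orthonormal, $\|x\|^2=\sum_{i=1}^n x_i^2<1$. First I would apply the Cauchy--Schwarz inequality to the vectors $(|x_1|,\dots,|x_n|)$ and $(1,\dots,1)$ in $\ell^n_2$ to get $\sum_{i=1}^n|x_i|\le\sqrt{n}\,\big(\sum_{i=1}^n x_i^2\big)^{1/2}\le\sqrt{n}$. Then, by the triangle inequality in $Y$,
$$\|T(x)\|=\Big\|\sum_{i=1}^n x_i\,T(e_i)\Big\|\le\sum_{i=1}^n|x_i|\,\|T(e_i)\|\le\Big(\max_{1\le i\le n}\|T(e_i)\|\Big)\sum_{i=1}^n|x_i|\le\sqrt{n}\max_{1\le i\le n}\|T(e_i)\|.$$
Since $x\in B_X(1)$ was arbitrary, $T[B_X(1)]\subseteq B_Y(r)$ for $r=\sqrt{n}\max_i\|T(e_i)\|$ (after passing to the closed ball, or just noting the strict inequality $\|T(x)\|<r$ unless all $T(e_i)=0$, the trivial case), whence $\|T\|\le\sqrt{n}\max_{1\le i\le n}\|T(e_i)\|$ by the definition of the operator norm recalled in the Preliminaries.

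There is no real obstacle here: the only ingredient beyond the definitions is the Cauchy--Schwarz inequality relating the $\ell_1$- and $\ell_2$-norms of the coordinate vector, and the constant $\sqrt{n}$ is exactly what that inequality produces. The one point to be mildly careful about is the distinction between open and closed unit balls in the definition of $\|T\|$, but since $\|T\|$ is an infimum this causes no trouble.
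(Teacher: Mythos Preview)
Your argument is correct and is essentially the same as the paper's: the paper factors $T$ as $(T\circ I)\circ I^{-1}$ with $I:\ell^n_1\to X$ the coordinate map for the orthonormal basis, and then uses $\|T\circ I\|=\max_i\|T(e_i)\|$ (your triangle-inequality step) and $\|I^{-1}\|=\sqrt{n}$ (your Cauchy--Schwarz step). The only difference is packaging; your version simply unwinds the operator-norm inequalities into explicit estimates on a single vector.
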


\begin{proof}  Consider the isomorphism $I:\ell^n_1\to X$ defined by $I:(x_1,\dots,x_n)\mapsto\sum_{i=1}^nx_ie_i$. It is easy to see that $\|I\|=1$. Using Cauchy--Bunyakovsky--Schwartz inequality, one can show that $\|I^{-1}\|=\sqrt{n}$.
It follows that $$
\|T\|=\|T\circ I\circ I^{-1}\|\le \|T\circ I\|\cdot\|I^{-1}\|=\max_{1\le i\le n}\|T(e_i)\|\cdot\sqrt{n}.
$$ 
\end{proof}

The {\em Banach-Mazur distance} between two isomorphic Banach spaces $X,Y$  is defined as
$$
d_{BM}(X, Y):= \inf\{\|I \| \cdot\|I^{-1}\|: \mbox{$I$ is an isomorphism between $X$ and $Y$}\}.
$$

The following proposition is a classical result of John \cite{john} (see also  \cite[Theorem 2.3.2]{alm}).	

\begin{proposition}[John]\label{p:John} For any $n$-dimensional Banach space $X$, we have $d_{BM}(X,\ell_2^n)\leq \sqrt{n}$.
\end{proposition}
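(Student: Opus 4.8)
The plan is to use John's maximal-volume ellipsoid. Fix a linear identification of $X$ with $\IR^n$ and let $B=B_X(1)$ be its closed unit ball, a symmetric convex body in $\IR^n$. Among all origin-centred ellipsoids contained in $B$ there is one of maximal volume: parametrizing ellipsoids as $E_A=\{x:x^\top A x\le 1\}$ for positive-definite $A$, the volume is proportional to $(\det A)^{-1/2}$, the constraint $E_A\subseteq B$ is closed, and since $rB_2^n\subseteq B\subseteq RB_2^n$ for some $0<r\le R$ the relevant sublevel set of $A$ is compact; so a maximizer $E$ exists and, as $B$ has nonempty interior, $E$ is genuinely $n$-dimensional. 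Applying the linear map that carries $E$ onto the Euclidean ball $B_2^n$ (this changes only the identification of $X$ with $\IR^n$, not the isometry type of $X$), we may assume from now on that $B_2^n\subseteq B$ and that $B_2^n$ is the ellipsoid of maximal volume inside $B$.

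The core of the proof is the reverse inclusion $B\subseteq\sqrt n\,B_2^n$. Suppose it fails: then some $x\in B$ has $\|x\|_2>\sqrt n$, and after a rotation we may assume $x=te_1$ with $t>\sqrt n$, where $e_1,\dots,e_n$ is the standard orthonormal basis. Since $B$ is convex and symmetric, $K:=\conv\big(B_2^n\cup\{-te_1,\,te_1\}\big)\subseteq B$. I would then produce an ellipsoid of the form $E_{a,b}=\{y:y_1^2/a^2+(y_2^2+\dots+y_n^2)/b^2\le 1\}$ with $a>1>b$, contained in $K$, and with $\mathrm{vol}(E_{a,b})=a\,b^{\,n-1}\,\mathrm{vol}(B_2^n)>\mathrm{vol}(B_2^n)$, contradicting maximality. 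Concretely, one computes the cross-sections of $K$ by the hyperplanes $\{y_1=s\}$ --- a Euclidean disc of radius $\sqrt{1-s^2}$ for $|s|\le 1/t$, and a linearly shrinking disc coming from the two cones for $1/t\le|s|\le t$ --- reads off a sufficient condition for $E_{a,b}\subseteq K$, and maximizes $a\,b^{\,n-1}$ over the admissible pairs $(a,b)$. The first-order calculation shows that precisely when $t>\sqrt n$ one may take $a=1+\delta$ and $b^2=1-c\,\delta$ with suitable constants so that $a\,b^{\,n-1}>1$ for all small $\delta>0$. This contradiction yields $B_2^n\subseteq B\subseteq\sqrt n\,B_2^n$.

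Finally, let $I:X\to\ell_2^n$ be the identity in the fixed coordinates. From $B_2^n\subseteq B$ we get $\|I^{-1}\|\le 1$ (a Euclidean-unit vector lies in $B$, hence has $X$-norm at most $1$), and from $B\subseteq\sqrt n\,B_2^n$ we get $\|I\|\le\sqrt n$ (an $X$-unit vector has Euclidean norm at most $\sqrt n$). Hence $d_{BM}(X,\ell_2^n)\le\|I\|\cdot\|I^{-1}\|\le\sqrt n$.

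The step I expect to be the main obstacle is the volume-perturbation computation in the second paragraph: determining the exact admissible region of pairs $(a,b)$ for which $E_{a,b}\subseteq K$, and verifying that the product $a\,b^{\,n-1}$ can be pushed above $1$ exactly when $t>\sqrt n$. This is also the only place where the symmetry of $B$ is used in an essential way (through keeping both points $\pm te_1$), and it is precisely what improves the general John bound $n$ to $\sqrt n$ in the symmetric case. An alternative, at the cost of invoking more machinery, is to quote John's decomposition of the identity at the contact points of $B$ and $B_2^n$ and run a trace and Cauchy--Bunyakovsky--Schwarz estimate; but the self-contained perturbation argument above avoids that.
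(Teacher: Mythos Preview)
The paper does not prove this proposition at all: it is stated as a classical result of John and simply referenced to \cite{john} and \cite[Theorem~2.3.2]{alm}, with no argument given. Your proposal, by contrast, outlines the standard John-ellipsoid proof, and the outline is correct: existence of the maximal-volume inscribed ellipsoid by a compactness argument, normalization so that this ellipsoid is $B_2^n$, the perturbation argument establishing $B\subseteq\sqrt{n}\,B_2^n$, and the final norm estimate $\|I\|\cdot\|I^{-1}\|\le\sqrt{n}$. Your description of the slices of $K=\conv(B_2^n\cup\{\pm t e_1\})$ (sphere slice for $|s|\le 1/t$, conical slice for $1/t\le|s|\le t$) is accurate.

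You correctly identify the one substantive step you have not carried out --- checking that an ellipsoid $E_{a,b}$ with $a\,b^{\,n-1}>1$ fits inside $K$ precisely when $t>\sqrt{n}$ --- and you flag it honestly as the main obstacle. So the sketch is sound but not yet a complete proof; filling in that computation (or, as you note, invoking John's decomposition of the identity at the contact points together with a trace/Cauchy--Schwarz estimate) would finish it. For the purposes of the present paper, however, the proposition is used only as a black box and no proof is required; your write-up already goes well beyond what the paper provides.
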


%\begin{corollary}[Corollary 1.17, \cite{hajek}]\label{hajek}
%Let  $X$ be a Banach space and let $Y$ be a finite-dimensional subspace of $X$. Then there exist a projection $P$ from $X$ onto $Y$ such that $\| P \| \leq \dim(Y)$.
%\end{corollary}

\begin{lemma}\label{l:isoper} Let $\e,\delta$ be positive real numbers. Let $i,j:X\to Y$ be two operators between Banach spaces. If $\|i-j\|\le\delta$ and $i$ is an $\e$-isometric embedding, then $j$ is an $(\e+\delta)$-isometric embedding.
\end{lemma}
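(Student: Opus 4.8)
The plan is to read off the two norm inequalities in the definition of an $(\e+\delta)$-isometric embedding for $j$ directly from the corresponding inequalities for $i$, using nothing more than the triangle inequality together with the pointwise estimate $\|(i-j)(x)\|\le\|i-j\|\cdot\|x\|\le\delta\|x\|$, which holds for every $x\in X$ by definition of the operator norm and the hypothesis $\|i-j\|\le\delta$.

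Concretely, I would fix an arbitrary nonzero vector $x\in X$. Since $i$ is an $\e$-isometric embedding, we have $(1-\e)\|x\|<\|i(x)\|<(1+\e)\|x\|$. Then the triangle inequality gives
$$\|j(x)\|\le\|i(x)\|+\|(i-j)(x)\|<(1+\e)\|x\|+\delta\|x\|=(1+(\e+\delta))\|x\|$$
and, symmetrically,
$$\|j(x)\|\ge\|i(x)\|-\|(i-j)(x)\|>(1-\e)\|x\|-\delta\|x\|=(1-(\e+\delta))\|x\|,$$
where both inequalities are strict precisely because the bounds on $\|i(x)\|$ are strict. It remains to observe that $j$ is injective: in the regime $\e+\delta<1$ (the only one in which this notion is applied later) the strict lower bound forces $\|j(x)\|>0$ for every nonzero $x$, so $\ker j=\{0\}$, and hence $j$ is an $(\e+\delta)$-isometric embedding.

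I do not expect any real obstacle here; the statement is essentially a one-line application of the triangle inequality. The only point worth a moment's care is bookkeeping of strictness — making sure the estimates obtained for $j$ are strict inequalities and that injectivity is not lost — and both of these are immediate from the strict bounds above.
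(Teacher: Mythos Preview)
Your argument is correct and is essentially identical to the paper's own proof: both fix a nonzero $x$, apply the triangle inequality to $\|j(x)\|$ from above and below, and read off the strict bounds $(1-(\e+\delta))\|x\|<\|j(x)\|<(1+(\e+\delta))\|x\|$. Your explicit remark on injectivity (needed by the paper's definition) is a small point the paper's proof leaves implicit.
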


\begin{proof} Given any nonzero $x\in X$, observe that
$$\|j(x)\|\le \|i(x)\|+\|i(x)-j(x)\|<(1+\e)\|x\|+\|i-j\|\cdot\|x\|\le (1+\e)\|x\|+\delta\|x\|=(1+\e+\delta)\|x\|$$
and
$$\|j(x)\|\ge \|i(x)\|-\|i(x)-j(x)\|>(1-\e)\|x\|-\delta\|x\|=(1-\e-\delta)\|x\|,$$
which means that $j$ is an $(\e+\delta)$-isometric embedding.
\end{proof}

\begin{lemma}\label{open} If $G:\IG\to\IG$ is a \Gurarii operator, then $G[B_\G(1)]=B_\G(1)$ and hence $\|G\|=1=\|G^\leftarrow\|$.
\end{lemma}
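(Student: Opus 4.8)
The plan is to show both inclusions $G[B_\G(1)]\subseteq B_\G(1)$ and $B_\G(1)\subseteq G[B_\G(1)]$. The first is immediate from nonexpansiveness: since $\|G\|\le 1$, we have $G[B_\G(1)]\subseteq B_\G(1)$ by definition of the operator norm. So the content of the lemma is the reverse inclusion, i.e.\ that the image of the open unit ball is \emph{all} of the open unit ball, which should follow from applying the defining property of a \Gurarii operator to a cleverly chosen finite-dimensional diagram.

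To get surjectivity onto $B_\G(1)$, fix $y\in B_\G(1)$; say $\|y\|=r<1$. First I would approximate: since the union of finite-dimensional subspaces is dense and $\G$ is \Gurarii, it suffices to hit a dense subset of $B_\G(1)$, so one may assume $y$ lies in some finite-dimensional subspace $B_0\subseteq\G$ embedded isometrically via $j_0$. The idea is to build a one-dimensional extension on the domain side: take $A=\IR$ with $\|\cdot\|$ scaled so that the generator $a$ has norm close to $r$ (or exactly $r$), set $A_0=0$, $B_0\ni y$, and let $T:A\to B$ be the rank-one operator sending the generator of $A$ to $y$ (so $B$ can be taken to be $B_0$ itself, or $B_0+\IR y=B_0$). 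Then $T$ is nonexpansive provided $\|a\|\ge\|y\|=r$; choosing $\|a\|=r$ makes $T$ an isometry on $A$. With $A_0=\{0\}$, the compatibility condition $G\circ i_0=j_0\circ T{\restriction}_{A_0}$ holds trivially. The \Gurarii property then yields, for each $\e>0$, an $\e$-isometric embedding $i_\e:A\to\G$ and $j_\e:B\to\G$ with $j_\e{\restriction}_{B_0}=j_0$ and $G\circ i_\e=j_\e\circ T$; in particular $G(i_\e(a))=j_\e(T(a))=j_\e(y)=j_0(y)=y$ and $\|i_\e(a)\|<(1+\e)r<1$ for small $\e$. Hence $y\in G[B_\G(1)]$, and since such $y$ are dense in $B_\G(1)$ we conclude $B_\G(1)\subseteq\overline{G[B_\G(1)]}$; a further approximation argument (or directly handling all $y$ with $\|y\|<1$ via the above since the construction already works for every such $y$ once $y$ is taken in a finite-dimensional subspace, and general $y$ is approximated) gives $B_\G(1)\subseteq G[B_\G(1)]$.

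The statement $\|G\|=1$ is then clear ($\|G\|\le1$ by hypothesis and $\|G\|\ge1$ since $G[B_\G(1)]=B_\G(1)$ is not contained in any smaller ball). For $\|G^\leftarrow\|=1$: from $G[B_\G(1)]=B_\G(1)$ we get $G[\G]\cap B_\G(1)=B_\G(1)\subseteq G[B_\G(1)]$, so by definition $\|G^\leftarrow\|\le1$; and $\|G^\leftarrow\|\ge1$ because $G$ is nonexpansive (if $\|G^\leftarrow\|<1$ then $G$ would expand, contradicting $\|G\|\le1$ — more precisely, for any $T$, $\|T\|\cdot\|T^\leftarrow\|\ge1$ whenever the relevant sets are nonempty).

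The main obstacle I anticipate is the passage from ``$y$ in a finite-dimensional subspace'' to ``arbitrary $y\in B_\G(1)$'': one must either argue that $G[B_\G(1)]$ is already closed in $B_\G(1)$ (which would follow if $G$ were open with closed range — but that is essentially what we are trying to prove) or, more carefully, run the approximation so that the preimages $i_\e(a)$ converge. In fact the cleanest route is to note that for \emph{any} $y$ with $\|y\|<1$ we can choose $r$ with $\|y\|<r<1$ and a finite-dimensional $B_0\ni y$ (approximating $y$ by $y'\in B_0$ with $\|y-y'\|$ tiny and $\|y'\|<r$), apply the argument to $y'$, and then iterate/take limits to produce an actual preimage of $y$ in $B_\G(1)$; this is a standard ``back-and-forth''-style convergence argument that the authors have surely set up in the referenced papers, so I would invoke completeness of $\G$ together with a geometric-series estimate on the successive corrections.
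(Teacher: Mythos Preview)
Your core construction is correct and is essentially the paper's proof: take $A=B=\IR$, $A_0=\{0\}$, $B_0=B$, $T$ the identity, and $j_0:B_0\to\G$ the isometric embedding $t\mapsto ty/\|y\|$; the \Gurarii property then produces an $\e$-isometric $i:A\to\G$ with $G(i(\|y\|))=j_0(\|y\|)=y$ and $\|i(\|y\|)\|<(1+\e)\|y\|<1$ once $\e$ is chosen with $(1+\e)\|y\|<1$. (The case $y=0$ is handled separately by $G(0)=0$.) This is exactly what the paper does.

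Your last two paragraphs, however, chase a phantom obstacle. The phrase ``the union of finite-dimensional subspaces is dense'' is misleading you: that union is all of $\G$, since every nonzero $y$ already lies in the one-dimensional subspace $\IR y$. There is no ``passage from $y$ in a finite-dimensional subspace to arbitrary $y\in B_\G(1)$'' to worry about; your own construction applies directly to every $y$ with $0<\|y\|<1$ by taking $B_0=\operatorname{span}\{y\}$ (or abstractly $B_0=\IR$ embedded via $t\mapsto ty/\|y\|$). The approximation, density, closure, and back-and-forth discussion should simply be deleted.
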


\begin{proof} The nonexpansive property of $G$ ensures that $G[B_\G(1)]\subseteq B_\G(1)$. To show that $B_\G(1) \subseteq G[B_\G(1)]$, fix any element $y\in B_{\G}(1)$. If $\|y\|=0$, then $y=0=G(0)\in G[B_\G(1)]$. So, we assume that $\|y\|>0$.

Let $X=Y=\IR$ and $T:X\to Y$ be the identity operator. Let $X_0=\{0\}\subset X$, $Y_0=Y$, $i_0:X_0\to\{0\}\subseteq\G$ be the trivial isometric embedding and $j:Y_0\to\IG$ be the isometric embedding defined by $j(t)=\frac{ty}{\|y\|}$.
Since $\|y\|<1$, there exists $\e>0$ such that $(1+\e)\|y\|<1$. 
Since the operator $G$ is \Gurarii\!, there exists an $\e$-isometric embedding $i:X\to\IG$ such that $G\circ i=j\circ T$. Then the element $x=i(\|y\|)$ has $G(x)=G\circ i(\|y\|)=j\circ T(\|y\|)=j(\|y\|)=y$ and $\|x\|=\|i(\|y\|)\|< (1+\e)\|y\|<1$. Therefore, $y=G(x)\in G[B_\G(1)]$ and hence $B_\G(1)=G[B_{\G}(1)]$. The latter equality implies that $\|G\|=1=\|G^\leftarrow\|$.
\end{proof}

In the proof of Theorem~\ref{t:main} we shall apply the following homogeneity property of the \Gurarii space, proved by Kubi\'s and Solecki in \cite[1.1]{KS}.

\begin{lemma}\label{l:KS} Let $X$ be a finite-dimensional linear subspace of the \Gurarii space $\G$. For every $\e>0$ and every $\e$-isometric embedding $f:X\to\G$ there exists a bijective isometry $I:\G\to\IG$ such that $\|f-I{\restriction}_X\|<\e$.
\end{lemma}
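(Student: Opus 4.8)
This lemma is the approximate ultrahomogeneity of the \Gurarii space, and the plan is to prove it by the standard back-and-forth construction powered by the defining extension property of a \Gurarii space. The point to exploit is that, since $f$ is $\e$-isometric, its exact distortion $\e_f:=\sup_{x\ne 0}\big|\,\|f(x)\|-\|x\|\,\big|/\|x\|$ is \emph{strictly} less than $\e$; fix once and for all a sequence of positive reals $(\e_n)_{n\ge 1}$ with $\e_n\downarrow 0$ and $\e_f+\sum_{n\ge 1}\e_n<\e$, and a dense sequence $(z_k)_{k\in\N}$ in $\G$. Starting from $E_0:=X$, $F_0:=f[X]$ and $h_0:=f$, I would inductively construct an increasing chain of finite-dimensional subspaces $E_0\subseteq E_1\subseteq\cdots$ of $\G$ with $\bigcup_nE_n$ dense, finite-dimensional subspaces $F_n\subseteq\G$, and $\e_n$-isometric bijections $h_n\colon E_n\to F_n$ with $\|h_{n+1}{\restriction}_{E_n}-h_n\|\le(1+\e_{n+1})\,\e_n$, arranging at alternate stages that the next vector $z_k$ is absorbed into the domain $E_n$, respectively (by running the step symmetrically with $h_n^{-1}$) lies within the current error tail $\sum_{j\ge n}\e_j$ of the range $F_n$.

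The substantive step is the passage from $h_n$ to $h_{n+1}$, where the distortion of $h_n$ is traded for a controlled displacement plus a \emph{preassigned smaller} distortion $\e_{n+1}$. Choose a finite-dimensional $E_n'\supseteq E_n$ in $\G$ containing the relevant $z_k$. On the vector space $E_n$ the two norms $x\mapsto\|x\|_{E_n}$ and $x\mapsto\|h_n(x)\|_\G$ are $\e_n$-close, so a routine amalgamation of close norms produces a finite-dimensional space $R$ together with isometric embeddings $\alpha\colon E_n'\to R$ and $\beta\colon F_n\to R$ such that $\|\alpha{\restriction}_{E_n}-\beta\circ h_n\|\le\e_n$. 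Applying the extension property of $\G$ to extend the inclusion $F_n\hookrightarrow\G$ along $\beta$ yields an $\e_{n+1}$-isometric embedding $\gamma\colon R\to\G$ with $\gamma\circ\beta$ equal to that inclusion; then $h_{n+1}:=\gamma\circ\alpha\colon E_n'\to\G$ is an $\e_{n+1}$-isometric embedding, $F_{n+1}:=h_{n+1}[E_{n+1}]$, $E_{n+1}:=E_n'$, and $\|h_{n+1}{\restriction}_{E_n}-h_n\|=\|\gamma\circ(\alpha{\restriction}_{E_n}-\beta\circ h_n)\|\le\|\gamma\|\cdot\e_n\le(1+\e_{n+1})\,\e_n$, as required. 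The back-stages are the mirror image, built from $h_n^{-1}$ and the inclusion $E_n\hookrightarrow\G$.

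Finally, since $\sum_n(1+\e_{n+1})\e_n<\infty$, the maps $h_n$ converge pointwise on the dense subspace $\bigcup_nE_n$, and as $\e_n\to 0$ the limit is a linear isometry extending uniquely to an isometric embedding $I\colon\G\to\G$; the back-stages force every $z_k$ into $\overline{I[\bigcup_nE_n]}=I[\G]$, so $I\in\Iso(\G)$, and $\|I{\restriction}_X-f\|=\lim_n\|h_n{\restriction}_X-h_0\|\le\e_f+\sum_{n\ge 1}\e_n<\e$, which is the assertion. The main obstacle is the error bookkeeping: the tolerances $\e_n$ must be committed to \emph{before} the construction so that the total displacement of $X$ stays strictly below the budget $\e$ — possible only because $f$ is strictly $\e$-isometric — and one must check that the amalgamation of nearby norms genuinely converts an $\e_n$-distortion into the preassigned $\e_{n+1}$-distortion at the cost of an $O(\e_n)$ displacement. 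Everything else is the routine \fra-style verification that the pointwise limit is a surjective linear isometry.
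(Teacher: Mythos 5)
First, for orientation: the paper does not prove this lemma at all --- it is quoted from Kubi\'s and Solecki \cite[1.1]{KS} --- so there is no internal argument to compare with; the strategy you propose (amalgamation of close norms plus the extension property of $\G$, run as an approximate back-and-forth with a preassigned error budget) is essentially the strategy of the cited source, and your opening observation that the exact distortion of $f$ is strictly below $\e$ (by compactness of the unit sphere of the finite-dimensional $X$) is indeed what makes a strict bound $<\e$ reachable.

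However, as written your back steps do not cohere with your induction hypotheses, and this is a genuine gap rather than routine verification. Your forward step is fine: the ``routine amalgamation'' is Lemma~2.1 of \cite{KS} followed by a pushout over $E_n$ to accommodate $E_n'$, and then $h_{n+1}=\gamma\circ\alpha$ has the stated properties. But running the step ``symmetrically with $h_n^{-1}$'' produces an $\e_{n+1}$-isometric map $g\colon F_n'\to\G$ with $g{\restriction}_{F_n}$ close to $h_n^{-1}$, and its image $g[F_n']$ only \emph{approximately} contains $E_n$; there is then no forward map $h_{n+1}$ defined on a subspace $E_{n+1}\supseteq E_n$ satisfying $\|h_{n+1}{\restriction}_{E_n}-h_n\|\le(1+\e_{n+1})\e_n$ --- inverting $g$ destroys the increasing chain of domains, and gluing $g^{-1}$ to $h_n$ is not even well defined. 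The standard repair is to drop the single chain and construct two interleaved sequences $f_n\colon X_n\to\G$ and $g_n\colon Y_n\to\G$ with $\|g_n\circ f_n-\mathrm{id}\|$ and $\|f_{n+1}\circ g_n-\mathrm{id}\|$ controlled on the relevant subspaces, whose pointwise limits are mutually inverse isometries; that bookkeeping is exactly where the work of \cite{KS} lies. Two smaller points: your surjectivity argument only yields $\operatorname{dist}(z_k,I[\G])\lesssim$ the error tail at the stage where $z_k$ is treated, a fixed positive quantity for each $k$, so you must either revisit each $z_k$ infinitely often, or argue that every ball contains terms $z_k$ of arbitrarily large index, or again use the mutually-inverse-limits formulation; and your final estimate omits the second-order terms $\sum_n\e_{n+1}\e_n$ coming from the factors $(1+\e_{n+1})$, so the budget should be fixed so that $\sum_{n\ge0}(1+\e_{n+1})\e_n<\e$ with $\e_0$ slightly above $\e_f$, which your strictness remark does permit.
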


\section{Characterizing \Gurarii operators}

In this section we shall present several characterizations of \Gurarii operators.
The following characterization was proved in \cite{GK}.

\begin{theorem}\label{t:GK} An operator $G:\G\to\G$ is \Gurarii if and only if it satisfies the following condition:
\begin{itemize}
\item[{(GA)}] for any $\eps>0$, any nonexpansive operator $ {T:X\to Y}$ between finite-dimensional Banach spaces, any Banach subspaces $X_0 \subseteq X$, $Y_0\subseteq Y$ with $T[X_0]\subseteq Y_0$, and any isometric embeddings $ i_0: {X_0}\to \G$, $ j_0: {Y_0}\to \G$ with $G \circ i_0= j_0 \circ T{\restriction}_{X_0}$, there exist $\eps$-isometric embeddings $ {i}: X\to \G$, ${j} :Y\to \G$ such that		$$\|i{\restriction}_{X_0} - i_0\| <\eps, \quad   \|j{\restriction}_{Y_0}- j_0\| < \eps, \quad \operatorname{and}\quad \|G\circ i  - j \circ T\| < \eps.$$
\end{itemize}
\end{theorem}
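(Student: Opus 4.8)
One implication is trivial. Assume $G$ is a \Gurarii operator; then for the data appearing in (GA) the \Gurarii property of $G$ supplies $\eps$-isometric embeddings $i:X\to\G$ and $j:Y\to\G$ with $i{\restriction}_{X_0}=i_0$, $j{\restriction}_{Y_0}=j_0$ and $G\circ i=j\circ T$, and these satisfy the three strict inequalities in (GA) with room to spare. So the whole content lies in the converse, which is what I would concentrate on.

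Assume $G$ is nonexpansive and satisfies (GA), and fix $\eps>0$, a nonexpansive operator $T:X\to Y$ between finite-dimensional spaces, subspaces $X_0\subseteq X$, $Y_0\subseteq Y$ with $T[X_0]\subseteq Y_0$, and isometric embeddings $i_0:X_0\to\G$, $j_0:Y_0\to\G$ with $G\circ i_0=j_0\circ T{\restriction}_{X_0}$. My first step would be to eliminate the approximation in the \emph{restrictions}. Fix linear projections $P:X\to X_0$ and $Q:Y\to Y_0$ (available since all spaces are finite-dimensional). Given $\delta$-isometric embeddings $i,j$ provided by (GA) with $\|i{\restriction}_{X_0}-i_0\|<\delta$, $\|j{\restriction}_{Y_0}-j_0\|<\delta$ and $\|G\circ i-j\circ T\|<\delta$, I replace them by $i'(x):=i_0(Px)+i(x-Px)$ and $j'(y):=j_0(Qy)+j(y-Qy)$. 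Then $i'{\restriction}_{X_0}=i_0$ and $j'{\restriction}_{Y_0}=j_0$ exactly, while $\|i'-i\|\le\delta\|P\|$ and $\|j'-j\|\le\delta\|Q\|$, so by Lemma~\ref{l:isoper} the embeddings $i',j'$ are $\big(1+\max\{\|P\|,\|Q\|\}\big)\delta$-isometric; moreover $G\circ i'{\restriction}_{X_0}=G\circ i_0=j_0\circ T{\restriction}_{X_0}=j'\circ T{\restriction}_{X_0}$ (using $T[X_0]\subseteq Y_0$), and since $\|G\|\le 1$ and $\|T\|\le 1$ we get $\|G\circ i'-j'\circ T\|\le(1+\|P\|+\|Q\|)\delta$. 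So (GA) already gives, for every $\delta>0$, embeddings that are $O(\delta)$-isometric, have the exact prescribed restrictions, make the square commute exactly on $X_0$, and make it commute up to $O(\delta)$ on all of $X$.

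What remains --- and this is the heart --- is to kill the residual commutativity error, and I would do this by producing $i,j$ as a limit of a Cauchy sequence. Namely, I would build $\eps_n$-isometric embeddings $i_n:X\to\G$ and $j_n:Y\to\G$ with the exact prescribed restrictions, commuting with the square on $X_0$, satisfying $\|G\circ i_n-j_n\circ T\|<\sigma_n$, and with $\|i_{n+1}-i_n\|+\|j_{n+1}-j_n\|<\beta_n$, arranging that $\eps_n$ stays below a fixed $\eps'<\eps$, $\sigma_n\to 0$ and $\sum_n\beta_n<\infty$. Then $(i_n),(j_n)$ are Cauchy in the operator norm, so $i:=\lim_n i_n$ and $j:=\lim_n j_n$ exist, are $\eps$-isometric, and satisfy $i{\restriction}_{X_0}=i_0$, $j{\restriction}_{Y_0}=j_0$ and $G\circ i=j\circ T$, which is the \Gurarii property. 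The inductive step from $(i_n,j_n)$ to $(i_{n+1},j_{n+1})$ is where (GA) is invoked again: renorm $X$ by $x\mapsto\|i_n(x)\|_\G$ and $Y$ by a norm close to $y\mapsto\|j_n(y)\|_\G$ but modified so that $T$ remains nonexpansive between the renormed spaces $X^{(n)},Y^{(n)}$ (in which $X_0,Y_0$ still carry their original norms); amalgamate $X^{(n)}$ with the original $X$ over $X_0$, and $Y^{(n)}$ with the original $Y$ over $Y_0$ (i.e.\ form the pushouts along the inclusions of $X_0,Y_0$, which is legitimate precisely because those norms agree on $X_0,Y_0$); apply (GA), with a small freely chosen parameter $\rho_n$, to extend the commuting square from the $X^{(n)}$-part to the amalgam; then restrict back to the original copies of $X,Y$ and finish with the projection correction of the previous step. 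The homogeneity of $\G$ enters via the consequence of Lemma~\ref{l:KS} that every $\eps$-isometric embedding of a finite-dimensional space into $\G$ lies within $\eps$ of an isometric embedding; this is what lets one replace the ``nearly isometric'' embeddings arising along the way by genuine isometric ones.

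The step I expect to be the obstacle is precisely this inductive passage. Making $T$ nonexpansive between $X^{(n)}$ and $Y^{(n)}$, and correcting nearly isometric embeddings to isometric ones, both perturb the data by amounts comparable to $\sigma_n$, and there is genuine tension between keeping the square commuting exactly where it must and keeping $T$ nonexpansive; the scheme closes only if these perturbations are organized so as to affect only the distance $\|i_{n+1}-i_n\|+\|j_{n+1}-j_n\|$ between consecutive approximations (which must stay summable), while the new commutativity error $\sigma_{n+1}$ is controlled solely by the parameter $\rho_n$ that we may shrink at will, and the isometry constants $\eps_n$ stay uniformly below $\eps$. An alternative I would keep in reserve is to prove instead that (GA) forces $G$ to be isometric to the canonical \Gurarii operator $\Omega:\G\to\G$, by an approximate back-and-forth using (GA) on $G$ and the exact \Gurarii property on $\Omega$, and then to transport the \Gurarii property along the resulting isometry; a back-and-forth is somewhat more forgiving here because only approximate matching is needed at each finite stage.
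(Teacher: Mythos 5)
First, a point of comparison: the paper itself offers no proof of this theorem --- it is quoted from \cite{GK}, where it is established by the Fra\"iss\'e-theoretic machinery --- so your attempt has to stand on its own. The trivial direction is indeed trivial, and your projection trick is correct and useful: with $i'(x)=i_0(Px)+i(x-Px)$, $j'(y)=j_0(Qy)+j(y-Qy)$ you do get exact restrictions, exact commutation on $X_0$, and a commutativity error that is still $O(\delta)$ (your constant is slightly off: the error is bounded by $(1+2\|P\|+\|Q\|)\delta$, since the term $(j_0-j)(TPx-QTx)$ costs $\delta(\|P\|+\|Q\|)\|x\|$, but this is immaterial). The problem is that everything after this reduction --- which is the entire content of the implication (GA)$\Rightarrow$Gurari\u\i\ --- is a plan rather than a proof, and the plan as stated has a concrete obstruction.

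The gap is the inductive step of your Cauchy scheme. Condition (GA) can only be invoked for base data that commute with $G$ \emph{exactly}: it requires isometric $i_0,j_0$ with $G\circ i_0=j_0\circ T{\restriction}_{X_0}$. At stage $n$ your pair $(i_n,j_n)$ commutes only up to $\sigma_n$ (and your renorming of $Y$ by $y\mapsto\|j_n(y)\|_\G$ makes $T$ only approximately nonexpansive), so (GA) cannot be applied to $(i_n,j_n)$ as the new base; what you need is an error-tolerant version of (GA), i.e.\ precisely the condition (GB) of Theorem~\ref{t:mainB}. But the paper proves that an operator satisfies (GB) only under the hypothesis that it is already Gurari\u\i\ (the argument uses, among other things, Lemma~\ref{open}, $G[B_\G(1)]=B_\G(1)$, which is derived from the Gurari\u\i\ property, not from (GA)), so that result cannot be borrowed here; deriving such a tolerance from (GA) alone --- including extracting approximate surjectivity of $G$ on balls from (GA) and upgrading it by successive approximation, and redoing the convex-hull renorming estimates --- is exactly the missing work. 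The alternative of always applying (GA) to the fixed base $(i_0,j_0)$ avoids this but destroys the scheme for a different reason: nothing then ties $i_{n+1}$ to $i_n$, so the summability $\sum_n\beta_n<\infty$ needed for the limits to exist is unsupported. You acknowledge this tension yourself and leave it unresolved, and the fallback route (an approximate back-and-forth showing $G$ is isometric to $\Omega$, then transporting the Gurari\u\i\ property) is likewise only named, not carried out. So the heart of the theorem remains unproved in the proposal.
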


Next, we show that the isometric embeddings $i_0,j_0$ in the above characterization can be replaced by $\delta$-isometric embeddings for a sufficiently small $\delta$.
For non-negative real numbers $\e,n,m,t$ let $\delta(\e,n,m,t)$ be the largest number $\delta$ such that
$$\delta\big(1+(1+\delta)(n+1)\max\{1,t\sqrt{nm}\}\big)\le\e.$$

\begin{theorem}\label{t:mainB} A nonexpansive operator $G:\G\to\G$ is \Gurarii if and only if it satisfies the condition:
\begin{enumerate}
		\item[{(GB)}] for any $\eps\in(0,1]$, any nonexpansive operator $ {T:X\to Y}$ between finite-dimensional Banach spaces, any Banach subspaces $X_0 \subseteq X$, $Y_0\subseteq Y$ with $T_0[X_0]\subseteq Y_0$ where $T_0=T{\restriction}_{X_0}:X_0\to Y_0$, and any $\delta$-isometric embeddings $ i_0: {X_0}\to \G$, $ j_0: {Y_0}\to \G$ with $$\|G \circ i_0 - j_0 \circ T_0\|<\delta:=\delta(\e,\dim(X_0),\dim(Y_0),\|T_0^\leftarrow\|)$$ there exist $\eps$-isometric embeddings $ {i}: X\to \G$, ${j} :Y\to \G$ such that		$$\|{i{\restriction}_{X_0} - i_0} \| < \eps, \quad   \|j{\restriction}_{Y_0}- j_0\| < \eps, \quad \operatorname{and}\quad \|G\circ i  - j \circ T\| < \eps.$$
	\end{enumerate}
\end{theorem}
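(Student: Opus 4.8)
The plan is to prove the two implications separately. The implication ``(GB) $\Rightarrow$ \Gurarii'' is immediate: it suffices to verify condition (GA) of Theorem~\ref{t:GK} and then quote that theorem. Given the data of (GA) --- and reducing at once to the case $\eps\le1$, since an $\eps_0$-isometric embedding with $\eps_0\le\eps$ is in particular $\eps$-isometric --- set $T_0:=T{\restriction}_{X_0}$ and $\delta:=\delta(\eps,\dim X_0,\dim Y_0,\|T_0^\leftarrow\|)$. Because the inequality $\delta\bigl(1+(1+\delta)(n+1)\max\{1,t\sqrt{nm}\}\bigr)\le\eps$ holds for all sufficiently small positive numbers, $\delta>0$. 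The isometric embeddings $i_0,j_0$ are in particular $\delta$-isometric, and $\|G\circ i_0-j_0\circ T_0\|=0<\delta$, so (GB) applies and yields $\eps$-isometric embeddings $i,j$ with $\|i{\restriction}_{X_0}-i_0\|<\eps$, $\|j{\restriction}_{Y_0}-j_0\|<\eps$ and $\|G\circ i-j\circ T\|<\eps$ --- which is exactly the conclusion of (GA). Hence $G$ is \Gurarii\ by Theorem~\ref{t:GK}.

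For the converse, assume $G$ is \Gurarii; then $G$ satisfies (GA) (Theorem~\ref{t:GK}) and is a metric quotient operator (Lemma~\ref{open}), so $\G/\ker G$ is isometric to $\G$ and $\operatorname{dist}(z,\ker G)=\|G(z)\|$ for all $z\in\G$. Fix the data of (GB) with $n=\dim X_0$, $m=\dim Y_0$, $t=\|T_0^\leftarrow\|$ and $\delta=\delta(\eps,n,m,t)$. The heart of the argument is to replace $i_0,j_0$ by genuine isometric embeddings $i_0',j_0'$ --- after renorming $X_0,Y_0$ by the pull-back norms $\|i_0'(\cdot)\|$, $\|j_0'(\cdot)\|$ --- making the square with $G$ commute \emph{exactly}, and then to invoke (GA). Using John's theorem (Proposition~\ref{p:John}) equip $X_0$ with a Euclidean norm $|\cdot|$ with $\|x\|\le|x|\le\sqrt n\,\|x\|$, write $X_0=\ker T_0\oplus C_0$ with $C_0=(\ker T_0)^{\perp}$, and pick a $|\cdot|$-orthonormal basis $e_1,\dots,e_n$ of $X_0$ whose first $k$ vectors span $\ker T_0$. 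For $\ell\le k$ one has $\|G(i_0(e_\ell))\|=\|(G\circ i_0-j_0\circ T_0)(e_\ell)\|<\delta$, hence $\operatorname{dist}(i_0(e_\ell),\ker G)<\delta$, so one may choose $w_\ell\in\ker G$ with $\|w_\ell-i_0(e_\ell)\|<\delta$; put $w_\ell:=i_0(e_\ell)$ for $\ell>k$ and let $i_0':X_0\to\G$ be the linear map with $i_0'(e_\ell)=w_\ell$. Then $i_0'$ agrees with $i_0$ on $C_0$, maps $\ker T_0$ into $\ker G$, and Lemma~\ref{l:Hilbert-ort} applied to $(X_0,|\cdot|)$, together with $|x|\le\sqrt n\|x\|$, gives $\|i_0'-i_0\|<n\delta$; by the choice of $\delta$ this is $<\eps$, and by Lemma~\ref{l:isoper} $i_0'$ is an $\eps$-isometric (in particular injective) embedding with $G\circ i_0'$ vanishing on $\ker T_0$.

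Next, $T_0{\restriction}_{C_0}:C_0\to T_0[X_0]$ is bijective and, by Lemma~\ref{l:Hilbert-op} applied to $T_0:(X_0,|\cdot|)\to Y_0$ together with $|x|\le\sqrt n\|x\|$, its inverse has norm at most $\sqrt n\,t$. Hence one may define $j_0'$ on $T_0[X_0]$ by $j_0'(T_0(c)):=G(i_0'(c))=G(i_0(c))$ for $c\in C_0$ and extend it by $j_0':=j_0$ on a $|\cdot|$-orthogonal complement $D_0$ of $T_0[X_0]$ in $Y_0$ (with respect to a John norm on $Y_0$). Since $j_0'-j_0$ is supported on $T_0[X_0]$ and equals $(G\circ i_0-j_0\circ T_0)\circ(T_0{\restriction}_{C_0})^{-1}$ there, the estimate above and $\|\pi_{T_0[X_0]}\|\le\sqrt m$ give $\|j_0'-j_0\|<\delta\,t\sqrt{nm}<\eps$, so $j_0'$ is an $\eps$-isometric embedding; and by construction $G\circ i_0'=j_0'\circ T_0$ on all of $X_0$ (on $\ker T_0$ both sides vanish, on $C_0$ it holds by definition of $j_0'$). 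Now put $X_0'':=(X_0,\|i_0'(\cdot)\|)$ and $Y_0'':=(Y_0,\|j_0'(\cdot)\|)$, so that $i_0',j_0'$ become isometric embeddings and, since $\|T_0(x)\|_{Y_0''}=\|G(i_0'(x))\|\le\|i_0'(x)\|=\|x\|_{X_0''}$, the operator $T_0:X_0''\to Y_0''$ is automatically nonexpansive; the norm estimates show $\|\cdot\|_{X_0''},\|\cdot\|_{Y_0''}$ are $(1\pm\eps')$-equivalent to the original norms for some $\eps'<\eps$. Extend these to $X''=(X,\|\cdot\|_{X''})$ and $Y''=(Y,\|\cdot\|_{Y''})$, taking $\|\cdot\|_{Y''}$ an extension of $\|\cdot\|_{Y_0''}$ that is $(1\pm\eps')$-equivalent to $\|\cdot\|_Y$ (e.g.\ an inf-convolution) and $\|x\|_{X''}:=\max\{\rho(x),\|T(x)\|_{Y''}\}$ for a similar extension $\rho$ of $\|\cdot\|_{X_0''}$; then $X_0''\subseteq X''$, $Y_0''\subseteq Y''$, $T:X''\to Y''$ is nonexpansive, $G\circ i_0'=j_0'\circ T{\restriction}_{X_0''}$, and all four norms stay $(1\pm\eps')$-equivalent to the originals. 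Applying (GA) to $T:X''\to Y''$, the subspaces $X_0'',Y_0''$, the embeddings $i_0',j_0'$ and a small parameter $\eps_1>0$ produces $\eps_1$-isometric embeddings $i:X''\to\G$, $j:Y''\to\G$ with $\|i{\restriction}_{X_0''}-i_0'\|<\eps_1$, $\|j{\restriction}_{Y_0''}-j_0'\|<\eps_1$, $\|G\circ i-j\circ T\|<\eps_1$. Translating back to the original norms and using $\|i{\restriction}_{X_0}-i_0\|\le\|i{\restriction}_{X_0}-i_0'\|+\|i_0'-i_0\|$ (and likewise for $j$) together with the $(1\pm\eps')$-equivalences, one checks that for $\eps_1$ small enough $i,j$ are $\eps$-isometric embeddings satisfying the three conclusions of (GB).

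The delicate point is the correction step, i.e.\ turning the approximately commuting $\delta$-isometric pair $(i_0,j_0)$ into an exactly commuting isometric pair $(i_0',j_0')$ with perturbations controlled in terms of $n,m,t$. The kernel of $T_0$ is exactly what makes this necessary: on $\ker T_0$ the lower square automatically reads $j_0'\circ T_0=0$, so one is forced to push $i_0[\ker T_0]$ into $\ker G$ --- which is possible only because $G$ is a metric quotient --- while transferring the residual discrepancy from $X_0$ to $Y_0$ costs the factor $\|T_0^\leftarrow\|$, and handling non-Euclidean norms costs the John factors $\sqrt n,\sqrt m$. The function $\delta(\eps,n,m,t)$ is tailored precisely so that the resulting bounds $n\delta$ and $t\sqrt{nm}\,\delta$, together with the $(1\pm\eps')$-distortions and the $\eps_1$ lost in the last application of (GA), all stay below $\eps$.
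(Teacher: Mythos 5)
Your proposal is correct and follows essentially the same route as the paper: push $i_0[\ker T_0]$ into $\ker G$ using Lemma~\ref{open}, transfer the discrepancy to $Y_0$ at the cost of $\|T_0^\leftarrow\|$ and the John factors $\sqrt n,\sqrt m$ (Proposition~\ref{p:John}, Lemmas~\ref{l:Hilbert-op}, \ref{l:Hilbert-ort}, \ref{l:isoper}), renorm so that the corrected pair $(i_0',j_0')$ becomes an exactly commuting isometric pair with $T$ nonexpansive, and conclude via condition (GA) of Theorem~\ref{t:GK}. The only deviations are cosmetic: the paper renorms both spaces by the convex hulls $D_X,D_Y$ and verifies $T[D_X]\subseteq D_Y$, whereas you force nonexpansiveness directly via $\max\{\rho(x),\|T(x)\|_{Y''}\}$ and defer the quantitative bookkeeping to a final ``choose $\eps_1$ small enough'', both of which are legitimate since all intermediate distortions stay strictly below $\eps$.
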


\begin{proof} The ``if'' part follows immediately from Theorem~\ref{t:GK} and the trivial implication (GB)$\Ra$(GA). 
%\smallskip

To prove the ``only if'' part, we need to show that every \Gurarii operator $G:\IG\to\IG$ has the property (GB). Fix any $\eps\in(0,1]$, any nonexpansive operator $T:X\to Y$ between finite-dimensional Banach spaces, any Banach subspaces $X_0 \subseteq X$ and $Y_0\subseteq Y$ with $T[X_0]\subseteq Y_0$, and any $\delta$-isometric embeddings $ i_0: X_0\to \G$, $j_0: Y_0\to \G$ such that $\|G\circ i_0 - j_0 \circ T_0\|<\delta$ where $\delta= \delta(\e,\dim(X_0),\dim(Y_0),\|T_0^\leftarrow\|)$ and $T_0=T{\restriction}_{X_0}:X_0\to Y_0$.

Let $n=\dim X_0$ and $m=\dim Y_0$. By Proposition~\ref{p:John}, there exist isomorphisms $I:X_0\to \ell_2^n$ and $J:Y_0\to\ell^m_2$ such that $\|I\|\cdot\|I^{-1}\|\le\sqrt{n}$ and $\|J\|\cdot\|J^{-1}\|\le\sqrt{m}$.

%$$\xymatrix{
%\ker(T{\restriction}_{X_0}) \ar@{->}[r]^{\subseteq}&X_0\ar@{->}[rr]^{T{\restriction}_{X_0}}&&T[X_0] \ar@{->}[r]^{\subseteq}&Y_0\\
%\ell^k_1\times\{0\}^{n-k} \ar@{<->}[u]^{I{\restriction}_{\ker (T{\restriction}_{X_0})}}\ar@{^{(}->}[r]&\ell_1^n\ar@{<->}[u]_I&&\ell_1^{n-k}\times\{0\}^{m-n+k}\ar@{<->}[u]^{J{\restriction}_{T[X_0]}}\ar@{^{(}->}[r]& \ar@{<->}[u]^{J} \ell_1^m
%}$$

%We shall approximate the isometric embeddings $i_0:X_0 \to \G$ and $j_0:Y_0 \to \G$ by $\e$-isometries $i'_0 :X_0 \to \G$ and  $j'_0:Y_0 \to \G$ such that $G\circ i'_0=j_0'\circ T{\restriction}_{X_0}$. 
Fix a basis $e_1,\cdots, e_{k}, e_{k+1}, \cdots, e_n$ in $X_0$ and a basis $e_1',\dots, e_{n-k}',e_{n-k+1}' \dots, e_m'$ in $Y_0$ such that:
\begin{enumerate}
\item $I(e_1),\dots, I(e_n)$ is an orthonormal basis for the Hilbert space $\ell_2^n$;
\item $e_1,\dots,e_k$ is a basis for the subspace $\ker(T_0)$;
\item $e'_l=T(e_{l+k})$ for $l\in\{1, \dots, n-k\}$;
\item $J(e_{n-k+1}'), \dots, J(e_m')$ is an orthonormal basis for the orthogonal complement of the space $J[T[X_0]]$ in the Hilbert space $\ell_2^m$.
\end{enumerate}

Let $K=I[\ker(T_0)]\subseteq\ell^n_2$ and $K^\perp$ be the orthogonal complement of $K$ in the Hilbert space $\ell^n_2$. The choice of the basis $e_1,\dots,e_n$ guarantees that $I(e_1),\dots,I(e_k)$ and $I(e_{k+1}),\dots,I(e_{n})$ are orthonormal bases for the spaces $K$ and $K^\perp$, respectively. Observe that the operator $T_\perp=T\circ I^{-1}{\restriction}_{K^\perp}:K^\perp\to T[X_0]$ is an isomorphism. By Lemma~\ref{l:Hilbert-op},
$$T_\perp[B_{K^\perp}(1)]=T\circ I^{-1}[B_{K^\perp}(1)]=T\circ I^{-1}[B_{\ell^n_2}(1)].$$
Also
$I[B_{X_0}(1)]\subseteq B_{\ell^n_2}(\|I\|)$ and hence $B_{X_0}(r)\subseteq I^{-1}[B_{\ell^n_2}(\|I\|r)]$ for every $r\in\IR_+$. Then
\begin{multline*}
\{r\in\IR_+:B_{T[X_0]}(1)\subseteq T[B_{X_0}(r)]\}\subseteq\\ \{r\in\IR_+:B_{T[X_0]}(1)\subseteq T\circ I^{-1}[B_{\ell^n_2}(\|I\|r)]\}=\\
\{r\in\IR_+:B_{T[X_0]}(1)\subseteq T_\perp[B_{K^\perp}(\|I\|r)]\}
\end{multline*}
and
thus
$$
\begin{aligned}
\|T_0^{\leftarrow}\|&=\inf\{r\in\IR_+:B_{T[X_0]}(1)\subseteq T[B_{X_0}(r)]\}\ge\\
&\ge\inf\{r\in\IR_+:B_{T[X_0]}(1)\subseteq T_\perp[B_{K^\perp}(\|I\|r)]\}=\\
&=\|I\|^{-1}\inf\{\|I\|r:B_{T[X_0]}(1)\subseteq T_\perp[B_{K^\perp}(\|I\|r)]\}=
\|I\|^{-1}\|T_\perp^{-1}\|.
\end{aligned}
$$
Therefore, $$\|T_\perp^{-1}\|\le\|T_0^{\leftarrow}\|\cdot\|I\|.$$

By condition (2), for any $l\in \{1,\dots, k\}$ we have $e_l \in \ker(T_0)$ and hence
\begin{align*}
\| G(i_0(e_l))\| & = \| G \circ i_0(e_l) -j_0\circ T_0(e_l)\| \leq \| G\circ i_0 -j_0\circ T_0\|\cdot \|e_l\| <\\
&< \delta\cdot \|I^{-1}\| \cdot \|I(e_l)\| =\delta\cdot \|I^{-1}\|.
\end{align*}

Lemma~\ref{open} implies that 
$$G\big[i_0(e_l)+B_\IG(\delta\|I^{-1}\|)\big]=G(i_0(e_l))+B_\IG(\delta\|I^{-1}\|) \ni 0.$$
Then there exist $y_l \in i_0(e_l)+B_\IG(\delta\|I^{-1}\|)$ such that $y_l\in \ker G$.

Consider the operator $i_0':X_0\to \G$ such that $i'_0(e_l)=y_l$ for all $l\in\{1,\dots,k\}$ and  $i'_0(e_l)=i_0(e_l)$ for all $l\in\{k+1,\dots,n\}$. 

\begin{claim}\label{cl:i0'} $\|i'_0-i_0\|<\delta n$ and $i'_0$ is a $\delta(n+1)$-isometric embedding.
\end{claim}

\begin{proof}
By Lemma~\ref{l:Hilbert-ort},
\begin{align*}
\| i'_0-i_0\| & =\| (i_0'-i_0)\circ I^{-1}\circ I\|\leq  \| (i_0'-i_0)\circ I^{-1}\| \cdot \| I\| \leq \\
&\leq \|I\|\sqrt{n} \max_{1\leq l\leq n}  \|(i'_0-i_0)\circ I^{-1}(I(e_l))\| = \|I\|\sqrt{n}\max_{1\le l\le n} \| i'_0(e_l)-i_0(e_l)\|=\\
&= \|I\|\sqrt{n} \max_{1\le l\le k} \| y_l-i_0(e_l)\|< \|I\|\sqrt{n}\delta\|I^{-1}\|\le\delta n.
\end{align*}

Since $i_0$ is a $\delta$-isometric embedding and $\|i_0-i'_0\|<\delta n$, the operator $i'_0:X_0\to\IG$ is a $\delta(n+1)$-isometric embedding by Lemma~\ref{l:isoper}.
\end{proof}

Let $j'_0:Y_0\to \G$ be a unique operator such that  $j'_0(e'_l)=G\circ i'_0(e_{l+k})=G\circ i_0(e_{l+k})$ for all $l\in\{1,\dots,n-k\}$ and $j'_0(e'_l)=j_0(e'_l)$ for all $l\in\{n-k+1,\dots,m\}$.
Observe that for every $l\in \{1,\dots,k\}$ we have $$j'_0\circ T_0(e_l)=j'_0(0)=0=G(y_l)=G\circ i'_0(e_l),$$ and for every $l\in\{k+1,\dots,n\}$ we have 
$$j'_0\circ T_0(e_l)=j'_0(e'_{l-k})=G\circ i'_0(e_{l-k+k})=G\circ i'_0(e_l).$$
Therefore, $j'_0\circ T_0(e_l)=G\circ i'_0(e_l)$ for every $l\in\{1,\dots,n\}$. Taking into account that $e_1,\dots,e_n$ is a basis of the space $X_0$, we conclude that \begin{equation}\label{eq:rivne}
j_0'\circ T_0=G\circ i'_0.
\end{equation}  

\begin{claim}\label{cl:j0'} $\|j'_0-j_0\|\le\delta(n+1)\sqrt{nm}\|T_0^{\leftarrow}\|$ and $j_0'$ is a $\delta(1+(n+1)\sqrt{nm}\|T_0^\leftarrow\|)$-isometric embedding.
\end{claim}

\begin{proof} Since $j'_0\circ T_0=G\circ i'_0$, we have 
\begin{multline*}
\|(j'_0-j_0)\circ T_0\|= \|G\circ i'_0-j_0\circ T_0\|\le\|G\circ i'_0-G\circ i_0\|+\|G\circ i_0-j_0\circ T_0\|\\
<\|G\|\cdot\|i'_0-i_0\|+\delta\le\delta n+\delta=\delta(n+1)
\end{multline*}
and
$$
\begin{aligned}
&\|(j'_0-j_0){\restriction}_{T[X_0]}\|=\|(j'_0-j_0)\circ T_\perp\circ T_\perp^{-1}\|=\\
&=\|(j'_0-j_0)\circ T_0\circ I^{-1}\circ T_\perp^{-1}\|\le
\|(j'_0-j_0)\circ T_0\|\cdot\|I^{-1}\|\cdot\|T_\perp^{-1}\|\le \\
&\le\delta(n+1)\|I^{-1}\|\cdot \|T_0^{\leftarrow}\|\cdot\|I\|\le \delta(n+1)\sqrt{n}\|T_0^{\leftarrow}\|.
\end{aligned}
$$

Given any element $z\in Y_0$, write $J(z)$ as $J(z)=J(x)+J(y)$ for some $x\in T[X_0]$ and $y\in Y_0$ such that $J(y)\in(J[T[X_0]])^\perp$. Since the vectors $J(x)$ and $J(y)$ are orthogonal in the Hilbert space $\ell^m_2$, the Pitagoras Theorem ensures that $\|J(x)\|\le\|J(z)\|$. Taking into account that $y$ belongs to the linear hull of the vectors $e'_{n-k+1},\dots,e'_m$ and $j_0'(e_l')=j_0(e_l')$ for all $l\in\{n-k+1,\dots,m\}$, we conclude that $j'_0(y)=j_0(y)$.
Then
$$
\begin{aligned}
\|j'_0(z)-j_0(z)\|&=\|j'_0(x+y)-j_0(x+y)\|=
\|j'_0(x)-j_0(x)\|=\|(j'_0-j_0)\circ J^{-1}\circ J(x)\|\le\\
&\le\|(j'_0-j_0){\restriction}_{T[X_0]}\|\cdot\|J^{-1}\|\cdot\|J(x)\|\le\|(j'_0-j_0){\restriction}_{T[X_0]}\|\cdot\|J^{-1}\|\cdot\|J(z)\|
\le\\
&\le\delta(n+1)\sqrt{n}\|T_0^{\leftarrow}\|\cdot\|J^{-1}\|\cdot\|J\|\cdot\|z\|\le \delta(n+1)\sqrt{n}\|T_0^{\leftarrow}\|\sqrt{m}\cdot\|z\|.
\end{aligned}
$$
and hence
$$\|j'_0-j_0\|\le\delta(n+1)\sqrt{nm}\,\|T_0^{\leftarrow}\|.$$
Since $j_0$ is a $\delta$-isometric embedding, the operator $j'_0$ is a $\delta(1+(n+1)\sqrt{nm}\|T_0^\leftarrow\|)$-isometric embedding by Lemma~\ref{l:isoper}.
\end{proof}

Let $$r=\frac1{1+\delta(n+1)\max\{1,\sqrt{nm}\|T_0^{\leftarrow}\|\}},$$ 
$D_X$ be the convex hull of the set $(i'_0)^{-1}[B_\G(1)]\cup B_X(r)$ in $X$ and $D_Y$ be the convex hull of the set $(j'_0)^{-1}[B_\G(1)]\cup B_Y(r)$ in $Y$.

\begin{claim}\label{cl:BXD} $B_X\big(r)\subseteq D_X\subseteq B_X\big(\frac1{1-\delta(n+1)}\big)$ and $D_X\cap X_0=(i'_0)^{-1}[B_\G(1)]$. 
\end{claim}

\begin{proof} The inclusion $B_X(r)\subseteq D_X$ follows from the definition of $D_X$. To see that $D_X\subseteq B_X(\frac1{1-\delta(n+1)})$, take any $x\in (i'_0)^{-1}[B_\G(1)]$. By Claim~\ref{cl:i0'}, $i_0'$ is a $\delta(n+1)$-isometric embedding and hence $(1-\delta(n+1))\|x\|\le \|i'_0(x)\|<1$ and $\|x\|<\frac1{1-\delta(n+1)}$. Then 
$$D_X\subseteq \conv\big((i'_0)^{-1}[B_\G(1)]\cup B_X(r)\big)\subseteq B_X\big(\tfrac1{1-\delta(n+1)}\big).$$

By Claim~\ref{cl:i0'}, $i'_0[B_{X_0}(r)]\subseteq B_\G(r\|i'_0\|)\subseteq B_\G\big(r(1+\delta(n+1))\big)\subseteq B_\G(1)$ and 
$i'_0[(i'_0)^{-1}[B_\G(1)]]\subseteq B_\G(1)$, which implies that $i'_0[D_X\cap X_0]\subseteq B_\G(1)$ and $D_X\cap X_0=(i'_0)^{-1}[B_\G(1)]$ by the injectivity of $i'_0$. 
\end{proof}

By analogy, we can apply Claim~\ref{cl:j0'} and prove

\begin{claim}\label{cl:BYD} $B_Y(r)\subseteq D_Y\subseteq B_Y\big(\tfrac1{1-\delta(1+(n+1)\sqrt{nm}\|T_{0}^\leftarrow\|)}\big)$  and $D_Y\cap Y_0=(j'_0)^{-1}[B_\G(1)].$
\end{claim}

\begin{claim}\label{cl:TD} $T[D_X]\subseteq D_Y$.
\end{claim}

\begin{proof} Observe that for every $x\in (i'_0)^{-1}[B_\G(1)]$, the equality $j'_0\circ T_0=G\circ i'_0$ proved in (\ref{eq:rivne}) and the inequality $\|G\|\le 1$ imply $$j'_0\circ T_0(x)=G\circ i_0'(x)\in G[B_\G(1)]\subseteq B_\G(1)$$ and hence $T(x)=T_0(x)\in (j'_0)^{-1}[B_\G(1)]\subseteq D_Y$. On the other hand, for every $x\in B_X(r)$ the inequality $\|T\|\le1$ implies $T(x)\in T[B_X(r)]\subseteq B_Y(r)\subseteq D_Y$. Then 
\begin{multline*}
T[D_X]=T\big[\conv\big((i'_0)^{-1}[B_\G(1)]\cup B_X(r)\big)\big]=\\
\conv\big(T[(i'_0)^{-1}[B_\G(1)]\cup B_X(r)]\big)\subseteq \conv(D_Y)=D_Y.
\end{multline*}
\end{proof}

Let $X'$ be the space $X$ endowed with the norm $\|\cdot\|_{X'}$ defined by
$$\|x\|_{X'}=\inf\{t\in\IR_+:x\in t\cdot D_X\},$$
and let $Y'$ be the space $Y$ endowed with the norm $\|\cdot\|_{Y'}$
$$\|y\|_{Y'}=\inf\{t\in \IR_+:y\in t\cdot D_Y\}.$$

It follows that $B_{X'}(1)=D_X$ and $B_{Y'}(1)=D_Y$.
Let $\Id_{X,X'}:X\to X'$, $\Id_{X',X}:X'\to X$, $\Id_{Y,Y'}:Y\to Y'$, and $\Id_{Y',Y}:Y'\to Y$ be the identity operators.

%Claims~\ref{cl:BXD} and \ref{cl:BYD} imply

%\begin{claim} 
%$$\|\Id_{X,X'}\|\le \tfrac1r,\quad \|\Id_{X',X}\|\le\tfrac1{1-\delta(n+1)},\quad\|\Id_{Y,Y'}\|\le \tfrac1r,\quad\|\Id_{Y',Y}\|\le\tfrac1{1-\delta(1+(n+1)\sqrt{nm}\|T^\leftarrow\|)}.$$
%\end{claim}

%\begin{claim} For every $x\in X'$ we have 
%$$(1-\delta(n+1))\|x\|\le \|\Id_{X',X}(x)\|\le \tfrac1r\|x\|.$$
%\end{claim}

In the Banach spaces $X'$ and $Y'$ consider the subspaces $X'_0=\Id_{X,X'}[X_0]$ and $Y_0'=\Id_{Y,Y'}[Y_0]$.  
Let $\Id_{X_0,X'_0}:X_0\to X_0'$, $\Id_{X_0',X_0}:X_0'\to X_0$, $\Id_{Y_0,Y_0'}:Y\to Y_0'$, and $\Id_{Y_0',Y_0}:Y_0'\to Y_0$ be the identity operators.

Claims~\ref{cl:BXD} and \ref{cl:BYD} imply that $i'_0\circ \Id_{X_0',X_0}:X'_0\to \G$ and $j'_0\circ \Id_{Y_0',Y_0}:Y_0'\to\G$ are isometric embeddings, and Claim~\ref{cl:TD} implies that the operator $T'=\Id_{Y,Y'}\circ T\circ\Id_{X',X}:X'\to Y'
  $ is nonexpansive. Let $T'_0=T'{\restriction}_{X'_0}$. Since $G\circ i'_0\circ \Id_{X_0',X_0}=j'_0\circ T'_0$, we can apply the definition of the \Gurarii space and find $\delta$-isometric embeddings $i':X'\to\G$ and $j':Y'\to\IG$ such that $\|i'{\restriction}_{X'_0}-i'_0\circ \Id_{X'_0,X_0}\|<\delta$, $\|j'{\restriction}_{Y'_0}-j'_0\circ\Id_{Y'_0,Y_0}\|<\delta$ and $\|G\circ i'-j\circ T'\|<\delta$. Consider the operators $i=i'\circ\Id_{X,X'}:X\to\G$ and $j=j'\circ\Id_{Y,Y'}:Y\to\G$. It remains to prove that $i,j$ are $\e$-isometric embeddings and 
  $$\max\{\|i{\restriction}_{X_0}-i_0\|,\|j{\restriction}_{Y_0}-j_0\|,\|G\circ i-j\circ T\|\}<\e.$$
To see that $i$ is an $\e$-isometric embedding, take any nonzero $x\in X$ and applying Claim~\ref{cl:i0'}, conclude that
\begin{multline*}
\|i(x)\|_{\G}=\|i'\circ \Id_{X,X'}(x)\|_{\G}\le\|i'\|\cdot\|\Id_{X,X'}(x)\|_{X'}<(1+\delta)\tfrac1r\|x\|_{X}=\\
(1+\delta)\cdot(1+\delta(n+1)\max\{1,\sqrt{nm}\|T_0^\leftarrow\|\})\|x\|_X\le(1+\e)\|x\|_X,
 \end{multline*}
 by the choice of $\delta=\delta(\e,n,m,\|T_0^\leftarrow\|)$.
On the other hand the inclusion $D_X\subseteq B_X(\frac1{1-\delta(n+1)})$ proved in Claim~\ref{cl:BXD} implies
 $$\|i(x)\|_{\G}=\|i'\circ\Id_{X,X'}(x)\|_\G>(1-\delta)\|\Id_{X,X'}(x)\|_{X'} \ge (1-\delta)(1-\delta(n+1))\|x\|_X\ge (1-\e)\|x\|_X$$ by the choice of $\delta=\delta(\e,n,m,\|T_0^\leftarrow\|)$. This means that $i:X\to\G$ is an $\e$-isometric embedding.
 
By analogy we can prove that $j$ is an $\e$-isometric embedding.

The inclusion $B_X(r)\subseteq D_X$ implies $\|\Id_{X_0,X_0'}\|\le\|\Id_{X,X'}\|\le\frac1r$ and hence
$$\|i{\restriction}_{X_0}-i_0\|=\|i'\circ \Id_{X_0,X_0'}-i'_0\circ\Id_{X_0,X'_0}\|\le \|i'-i'_0\|\cdot\|\Id_{X_0,X'_0}\|< \frac\delta{r}\le\e.$$
By analogy we can prove that $\|j{\restriction}_{Y_0}-{j}_0\|<\e$.
Finally, observe that 
\begin{multline*}
\|G\circ i-j\circ T\|=\|G\circ i'\circ \Id_{X,X'}-j'\circ  \Id_{Y,Y'}\circ T\|=\|G\circ i'\circ \Id_{X,X'}-j'\circ   T'\circ\Id_{X,X'}\|\le\\
\le\|G\circ i'-j'\circ T'\|\cdot\|\Id_{X,X'}\|< \delta\cdot \|\Id_{X,X'}\|\le\frac\delta{r}\le\e.
\end{multline*}
\end{proof}

Next, we show that the spaces $X,Y$, operator $T$, subspaces $X_0,Y_0$ and $\delta$-isometries $i_0,j_0$ in Theorem~\ref{t:mainB} can be chosen from a suitable countable family of standard objects, defined below.

A {\em standard linear space} is the linear space $\IR^n$ for some $n\in\w$. For a standard linear space $X=\IR^n$ by $E_X=\{x\in\{0,1\}^n:|x^{-1}(1)|=1\}$ we denote the standard unit basis in $X=\IR^n$. A linear subspace $X_0$ of a standard linear space $X$ is called {\em standard} if $X_0$ coincides with the linear hull of some subset $E\subseteq E_X$. It is clear that each standard linear space $X$ has exactly $2^{\dim(X)}$ standard linear subspaces.

An operator $T:X\to Y$ between linear spaces is called {\em standard} if there exist numbers $k,n,m$ such that $X=\IR^n$, $Y=\IR^m$, $0\le n-k\le m$ and $T$ assigns to each $x\in\IR^n$ the vector $y=T(x)$ defined by
$$y(i)=\begin{cases}
x(i+k)&\mbox{if $i<n-k$}\\
0&\mbox{otherwise}.
\end{cases}
$$
In this case $\ker(T)=\IR^{k}\times\{0\}^{n-k}$ and $T[X]=\IR^{n-k}\times\{0\}^{m-n+k}$. 

A norm $\|\cdot\|$ on a standard linear space $X=\IR^n$ is called {\em rational} if its closed unit ball $\{x\in X:\|x\|\le1\}$ coincides with the convex hull of some finite set $F\subseteq\IQ^n\subseteq\IR^n$. It is clear that the family of rational norms on $\IR^n$ is countable.

A {\em rational Banach space} if a standard linear space endowed with a rational norm.

For a countable dense set $D$ in the \Gurarii space $\G$, let $\F_D$ be the family of all $9$-tuples $(\e,X,Y,T,X_0,Y_0,T_0,i_0,j_0)$ consisting of
\begin{itemize}
\item a rational number $\e\in(0,1]$;
\item rational Banach spaces  $X,Y$;
\item a nonexpansive standard operator $T:X\to Y$;
\item standard linear subspaces $X_0\subseteq X$ and $Y_0\subseteq Y$ such that $T[X_0]\subseteq Y_0$;
\item the operator $T_0=T{\restriction}_{X_0}:X_0\to Y_0$;
\item $\delta(\e,\dim(X_0),\dim(Y_0),\|T_0^\leftarrow\|)$-isometric embeddings $i_0:X_0\to\IG$ and $j_0:Y_0\to\IG$ such that $i_0(E_X\cap X_0)\cup j_0(E_{Y}\cap Y_0)\subseteq D$.
\end{itemize}
It is easy to see that the family $\F_D$ is countable.

\begin{theorem}\label{t:GC} Let $D$ be a countable dense subset of the \Gurarii space $\G$.  A nonexpansive operator $G:\G\to\G$ is \Gurarii if and only if it satisfies the condition:
\begin{enumerate}
		\item[{(GC)}] for any $9$-tuple $(\e,X,Y,T,X_0,Y_0,T_0,i_0,j_0)\in\F_D$ with $$\|G\circ i_0-j_0\circ T{\restriction}_{X_0}\|<\delta(\e,\dim(X_0),\dim(Y_0),\|T_0^\leftarrow\|),$$ there exist $\eps$-isometric embeddings $ {i}: X\to \G$, ${j} :Y\to \G$ such that		$$\|i{\restriction}_{X_0} - i_0 \| < \eps, \quad   \|j{\restriction}_{Y_0}- j_0\| < \eps, \quad \operatorname{and}\quad \|G\circ i  - j \circ T\| < \eps.$$
	\end{enumerate}
\end{theorem}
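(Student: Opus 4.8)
The plan is to prove the nontrivial (``if'') implication by showing that (GC) forces condition (GA) of Theorem~\ref{t:GK}, which characterises \Gurarii operators; the ``only if'' implication is immediate, since by Theorem~\ref{t:mainB} a \Gurarii operator satisfies (GB), and (GC) is nothing but the restriction of (GB) to those instances in which $(\e,X,Y,T,X_0,Y_0,T_0,i_0,j_0)\in\F_D$ (a rational $\e\in(0,1]$ is admissible, a rational Banach space is finite-dimensional, and a standard operator is nonexpansive by the very definition of $\F_D$). The decisive reason to reduce to (GA) rather than to (GB) is that in (GA) the maps $i_0,j_0$ are genuine isometric embeddings and $G\circ i_0=j_0\circ T{\restriction}_{X_0}$ holds exactly; hence after a small perturbation of the data all the resulting ``defects'' become as small as we wish and only have to be smaller than the \emph{fixed} positive number $\delta(\e',\dim X_0,\dim Y_0,\|T_0^\leftarrow\|)$, whereas starting from (GB) the perturbation error would be competing with the (simultaneously shrinking) quantity $\delta$ and the argument would not close.

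So fix $\e>0$, a nonexpansive operator $T:X\to Y$ between finite-dimensional Banach spaces, subspaces $X_0\subseteq X$, $Y_0\subseteq Y$ with $T[X_0]\subseteq Y_0$, and isometric embeddings $i_0:X_0\to\G$, $j_0:Y_0\to\G$ with $G\circ i_0=j_0\circ T{\restriction}_{X_0}$. First I would prove a purely linear-algebraic normalisation lemma: for any linear operator $T:X\to Y$ between finite-dimensional vector spaces and any subspaces $X_0\subseteq X$, $Y_0\subseteq Y$ with $T[X_0]\subseteq Y_0$, there are linear isomorphisms $\phi:X\to\IR^n$ and $\psi:Y\to\IR^m$ such that $\psi\circ T\circ\phi^{-1}$ is a standard operator while $\phi[X_0]$ and $\psi[Y_0]$ are standard linear subspaces. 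One proves this by choosing a basis of $X$ in the order: a basis of $\ker(T)\cap X_0$, its extension to a basis of $\ker(T)$, a basis of a complement $W$ of $\ker(T)\cap X_0$ in $X_0$ (with $W\cap\ker T=0$), and an extension of it to a basis of a complement of $\ker T$ in $X$; and a basis of $Y$ consisting of the $T$-images of the non-kernel part of the basis of $X$ (with the complement of $T[X_0]$ in $T[X]$ chosen so that $Y_0\cap T[X]$ is spanned by an initial segment), followed by a basis of a complement of $T[X]$ in $Y$ (chosen so that $Y_0$ picks up the remaining coordinates). Transporting the norms of $X,Y$ along $\phi,\psi$ turns $\phi,\psi$ into isometries and $T':=\psi\circ T\circ\phi^{-1}$ into a nonexpansive standard operator, with $\phi[X_0],\psi[Y_0]$ standard subspaces and $T'[\phi[X_0]]\subseteq\psi[Y_0]$.

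Next I would approximate the transported norms by rational ones: fix a small $\epsilon_1>0$ and a fixed rational $\e'\in(0,\min\{\e,1\})$, and choose a rational norm $p$ on $\IR^n$ with $\|\cdot\|_{X'}\le p\le(1+\epsilon_1)\|\cdot\|_{X'}$ and a rational norm $q$ on $\IR^m$ with $(1-\epsilon_1)\|\cdot\|_{Y'}\le q\le\|\cdot\|_{Y'}$ (such symmetric rational polytopal unit balls exist by compactness; these inequalities are precisely what keeps the same standard linear map $T''$ nonexpansive as a map between the rational Banach spaces $X''=(\IR^n,p)$ and $Y''=(\IR^m,q)$). The ``identity'' maps $\alpha_X:X\to X''$, $\alpha_Y:Y\to Y''$ obtained from $\phi,\psi$ are then $O(\epsilon_1)$-isometric and satisfy $\alpha_Y\circ T=T''\circ\alpha_X$; moreover $\|T''^\leftarrow_0\|\le\tfrac{1+\epsilon_1}{1-\epsilon_1}\|T_0^\leftarrow\|$, so for $\epsilon_1$ small $\delta(\e',\dim X_0,\dim Y_0,\|T''^\leftarrow_0\|)$ stays above a fixed positive $d_0$. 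Putting $\hat i_0:=i_0\circ(\alpha_X{\restriction}_{X_0})^{-1}$ and $\hat j_0:=j_0\circ(\alpha_Y{\restriction}_{Y_0})^{-1}$ (these are $O(\epsilon_1)$-isometric and satisfy $G\hat i_0=\hat j_0\circ T''{\restriction}_{X''_0}$) and perturbing them by less than $\sigma$ in operator norm, using the density of $D$, to embeddings $i''_0,j''_0$ whose values on the standard unit vectors spanning $X''_0,Y''_0$ lie in $D$, Lemma~\ref{l:isoper} gives that $i''_0,j''_0$ are $(O(\epsilon_1)+\sigma)$-isometric and $\|G\circ i''_0-j''_0\circ T''{\restriction}_{X''_0}\|\le 2\sigma$. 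Choosing first $\e'$, then $\epsilon_1$, then $\sigma$ small enough, the $9$-tuple $(\e',X'',Y'',T'',X''_0,Y''_0,T''{\restriction}_{X''_0},i''_0,j''_0)$ belongs to $\F_D$ and satisfies the hypothesis of (GC); the resulting $\e'$-isometric embeddings $\bar i:X''\to\G$, $\bar j:Y''\to\G$ give, via $i:=\bar i\circ\alpha_X$, $j:=\bar j\circ\alpha_Y$ and Lemma~\ref{l:isoper} together with $\alpha_Y\circ T=T''\circ\alpha_X$, $\e$-isometric embeddings with $\|i{\restriction}_{X_0}-i_0\|<(\e'+\sigma)(1+\epsilon_1)<\e$, $\|j{\restriction}_{Y_0}-j_0\|<\e$ and $\|G\circ i-j\circ T\|<\e'(1+\epsilon_1)<\e$, i.e. $G$ satisfies (GA).

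The main obstacle is the linear-algebraic normalisation lemma: the notion of ``standard operator'' is rigid (it prescribes the kernel to be an initial coordinate subspace and the map to be a coordinate shift), so the bases of $X$ and $Y$ must be coordinated very carefully to make $T$ standard while simultaneously forcing $X_0$ and $Y_0$ to be spanned by standard unit vectors; the freedom remaining after fixing the kernel-adapted part of the basis of $X$ — namely the choice of a complement of $\ker T$ in $X$ and, through $T$, of a complement of $T[X_0]$ in $T[X]$ — is exactly what is needed, and checking that it suffices is the step that requires genuine care. The remaining ingredients — existence of the rational polytopal approximations of the norms, the continuity estimate for the openness norm, and the bookkeeping of the three nested smallness conditions on $\e',\epsilon_1,\sigma$ — are routine.
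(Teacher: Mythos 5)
Your proof is correct, and its technical core is essentially the paper's: the coordinated choice of bases that simultaneously makes $T$ a standard operator and $X_0,Y_0$ standard subspaces (with the initial-segment conditions coming from $\ker T_0\subseteq\ker T\subseteq T^{-1}[Y_0]$ and a complement of $T[X]$ in $Y$ chosen compatibly with $Y_0$), the rational approximation of the norms with the inequalities oriented so that the same standard map stays nonexpansive, the perturbation of the embeddings so that the standard basis vectors land in $D$, and the transport of the (GC)-output back along the identity maps with the $(1+\epsilon_1)$-factors. The genuine difference is the target of the reduction: you verify condition (GA) of Theorem~\ref{t:GK}, whereas the paper verifies condition (GB) and invokes Theorem~\ref{t:mainB}. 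Your choice does buy a simpler error budget: since in (GA) the initial data are exact isometric embeddings with $G\circ i_0=j_0\circ T{\restriction}_{X_0}$, every defect created by normalisation, rational approximation and the move into $D$ only has to stay below the \emph{fixed} positive threshold $\delta(\e',\dim X_0,\dim Y_0,\|(T''_0)^\leftarrow\|)$, which remains bounded away from $0$ as $\epsilon_1,\sigma\to0$. However, your parenthetical claim that a reduction starting from (GB) ``would not close'' is not accurate: the paper does exactly that, and the competition with the shrinking $\delta$ is resolved by creating slack first — by compactness of the unit spheres the given $\delta$-isometric embeddings are in fact $\delta'$-isometric for some $\delta'<\delta$ (and one may assume the defect is $<\delta'$), and then continuity of $\delta(\cdot,\dim X_0,\dim Y_0,\cdot)$ supplies a $\nu>0$ absorbing all perturbations. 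This misjudgement does not affect the validity of your argument, since establishing (GA) suffices by Theorem~\ref{t:GK}, and your treatment of the ``only if'' direction (a \Gurarii operator satisfies (GB), hence its restriction (GC)) coincides with the paper's.
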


\begin{proof} The ``only if'' part follows immediately from the ``only if'' part of Theorem~\ref{t:mainB}. To prove the ``if'' part, assume that the condition (GC) is satisfied. By Theorem~\ref{t:mainB}, the \Gurarii property of the operator $G$ will follow as soon as we show that $G$ satisfies the condition (GB). To this end, fix any $\e\in(0,1]$, any nonexpansive operator $T:X\to Y$ between finite-dimensional Banach spaces, any linear subspaces $X_0\subseteq X$ and $Y_0\subseteq Y$ with $T[X_0]\subseteq Y_0$, and $\delta$-isometric embeddings $i_0:X_0\to \G$ and $j_0:Y_0\to\IG$ such that $\|G\circ i_0-j_0\circ T_0\|<\delta:=\delta(\e,\dim(X_0),\dim(Y_0),\|T_0^\leftarrow\|)$, where $T_0=T{\restriction}_{X_0}:X_0\to Y_0$. 

Choose a basis $E_X=\{e_1,\dots,e_{\dim(X)}\}$ in the space $X$ such that 
\begin{itemize}
\item $\{e_1,\dots,e_{\dim(\ker(T_0))}\}$ is a basis for the linear space $\ker(T_0)$;
\item $\{e_1,\dots,e_{\dim(\ker(T))}\}$ is a basis for the linear space $\ker(T)$;
\item $\{e_1,\dots,e_{\dim(\ker(T_0))}\}\cup\{e_{\dim(\ker(T))+1},\dots,e_{\dim(\ker(T))+\dim(X_0)-\dim(\ker(T_0))}\}$ is a basis for the space $X_0$;
\item $\{e_1,\dots,e_{\dim(T^{-1}[Y_0])}\}$ is a basis for the linear space $T^{-1}[Y_0]$.
\end{itemize} 
Let $E_Y=\{e'_1,\dots,e'_{\dim(Y)}\}$ be a basis for the linear space $Y$ such that $e'_i=T(e_{\dim(\ker(T))+i})$ for all $i\in\{1,\dots,\dim(X)-\dim(\ker(T))\}$. The bases $E_X$ and $E_Y$ allow us to identify the spaces $X,Y$ with the standard linear spaces $\IR^{\dim(X)}$ and $\IR^{\dim(Y)}$. The choice of the bases $E_X,E_Y$ ensures that the operator $T:X\to Y$ is standard and the subspaces $X_0\subseteq X$ and $Y_0\subseteq Y$ are standard. 

By the compactness of the unit spheres in the Banach spaces $X_0$, $Y_0$, there exists $\delta'<\delta$ such that the $\delta$-isometric embeddings $i_0:X_0\to \IG$ and $j_0:Y_0\to\G$ are $\delta'$-isometric embeddings. Since $\|G\circ i_0-j_0\circ T_0\|<\delta$, we can additionally assume that $\|G\circ i_0-j_0\circ T_0\|<\delta'$.

By the continuity of the function $\delta(\cdot,\dim(X_0),\dim(Y_0),\cdot)$, there exists a positive real number $\nu$ such that $$\delta'+2\nu\le\delta(\e',\dim(X_0),\dim(Y_0),t)$$ for any $\e',t\in\IR$ with $$\frac{\e-2\nu}{1+\nu}\le\e'\le\e\quad\mbox{and}\quad\|T_0^\leftarrow\|\le t\le (1+\nu)^2\|T_0^\leftarrow\|.$$ By the density of $D$ in $\G$, the $\delta'$-isometric embeddings $i_0:X_0\to\G$ and $j_0:Y_0\to\G$ can be approximated by $\delta'$-isometric embeddings $i''_0:X_0\to\G$ and $j''_0:Y_0\to\G$ such that $i''_0(E_X\cap X_0)\cup j''_0(E_Y\cap Y_0)\subseteq D$, $\max\{\|i''_0-i_0\|,\|j''_0-j_0\|\}<\nu$, and $\|G\circ i''_0-j_0''\circ T_{0}\|<\delta'$. The norms $\|\cdot\|_X$ and $\|\cdot\|_Y$ on the standard linear spaces $X,Y$ can be approximated by rational norms $\|\cdot\|_{X'}:X\to[0,\infty)$ and $\|\cdot\|_{Y'}:Y\to[0,\infty)$ such that for every nonzero elements $x\in X$ and $y\in Y$ the following conditions are satisfied:
\begin{itemize}
\item[(a)] $\|x\|_X\le\|x\|_{X'}<(1+\nu)\|x\|_X$ and $\frac1{1+\nu}\|y\|_Y\le \|y\|_{Y'}\le\|y\|_Y$;
\item[(b)] $(1-\delta')\|x\|_{X'}<\|i_0'(x)\|_\G<(1+\delta')\|x\|_{X'}$ and $(1-\delta')\|y\|_{Y'}<\|j_0'(y)\|_\G<(1+\delta')\|y\|_{X'}$.
\end{itemize}
Let $X'$ (resp. $Y'$) be the rational Banach space $X$ (resp. $Y$) endowed with the rational norm $\|\cdot\|_{X'}$ (resp. $\|\cdot\|_{Y'}$). 
The condition (a) implies
\begin{equation}\label{eq:ballsXY}
B_{X'}(1)\subseteq B_X(1)\subseteq B_{X'}(1+\nu)\mbox{ \ and \ }B_{Y}(1)\subseteq B_{Y'}(1)\subseteq B_Y(1+\nu).
\end{equation}

Let $$\Id_{X,X'}:X\to X',\quad \Id_{X',X}:X'\to X,\quad \Id_{Y,Y'}:Y\to Y',\quad\mbox{and}\quad \Id_{Y',Y}:Y'\to Y$$ be the identity operators. Let $X'_0=\Id_{X,X'}[X_0]$, $Y'_0=\Id_{Y,Y'}[Y_0]$, and 
$$\Id_{X_0,X_0'}:X_0\to X_0',\quad \Id_{X'_0,X_0}:X'_0\to X_0,\quad\Id_{Y_0,Y'_0}:Y_0\to Y'_0\quad\mbox{and}\quad\Id_{Y'_0,Y_0}:Y'_0\to Y_0$$ be the identity operators. Let $i_0'=i_0''\circ\Id_{X_0',X_0}$ and $j_0'=j_0''\circ\Id_{Y_0',Y_0}$.

The condition (a) implies that $\max\{\|\Id_{X',X}\|,\|\Id_{Y,Y'}\|\}\le 1$ and hence the operator $T'=\Id_{Y,Y'}\circ T\circ\Id_{X',X}:X'\to Y'$ is nonexpansive. Let $T'_0=T'{\restriction}_{X'_0}$. The inclusions (\ref{eq:ballsXY}) imply
$$\|T_0^{\leftarrow}\|\le\|(T_0')^\leftarrow\|\le (1+\nu)^2\|T_0^\leftarrow\|.$$ 

Choose any rational $\e'$ such that $\frac{\e-2\nu}{1+\nu}\le\e'\le\frac{\e-\nu}{1+\nu}$. The choice of $\nu$ ensures that $$\delta'+2\nu\le\delta(\e',\dim(X_0),\dim(Y_0),\|(T_0')^\leftarrow\|).$$ Then the $9$-tuple $(\e',X',Y',T',X_0',Y'_0,T'_0,i_0',j'_0)$ belongs to the family $\F_D$. 
Since 
$$
\begin{aligned}
&\|G\circ i'_0-j'_0\circ T_0'\|\le \|G\circ i''_0-j''_0\circ T_0\|\cdot\|\Id_{X'_0,X_0}\|\le \|G\circ i''_0-j''_0\circ T_0\|\le\\
&\le \|G\circ i''_0-G\circ i_0\|+\|G\circ i_0-j_0\circ T_0\|+\|j_0\circ T_0-j''_0\circ T_0\|\le\\
&\le\|G\|\cdot\|i''_0-i_0\|+\delta'+\|j_0-j_0''\|\cdot\|T_0\|\le \nu+\delta'+\nu\le\delta(\e',\dim(X_0),\dim(Y_0),\|(T_0')^\leftarrow\|).
\end{aligned}
$$

By the condition (GC), there exist $\e'$-isometric embeddings $i':X'\to\G$ and $j':Y'\to G$ such that $$\max\{\|i'{\restriction}_{X_0'}-i'_0\|,\|j'{\restriction}_{Y'_0}-j_0'\\|,\|G\circ i'-j'\circ T'\|\}<\e'.$$
The inequality $\e'(1+\nu)\le\e$ and the condition (a) imply that the maps $i=i'\circ\Id_{X,X'}$ and $j=j'\circ\Id_{Y,Y'}$ are $\e$-isometric embeddings.
It remains to show that
$$\max\{\|i{\restriction}_{X_0}-i_0\|,\|j{\restriction}_{Y_0}-j_0\|,\|G\circ i-j\circ T\|\}<\e.$$

Observe that 
\begin{multline*}
\|i{\restriction}_{X_0}-i_0\|=\|i'\circ\Id_{X_0,X_0'}-i''_0\|+\|i''_0-i_0\|\le\\
\le\|i'{\restriction}_{X_0'}-i''_0\circ\Id_{X_0',X_0}\|\cdot\|\Id_{X_0,X_0'}\|+\|i''_0-i_0\|<\e'(1+\nu)+\nu\le\e.
\end{multline*}
By analogy we can prove that $\|j{\restriction}_{Y_0}-j_0\|<\e$.
Finally,
$$\|G\circ i-j\circ T\|=\|G\circ i'\circ\Id_{X,X'}-j'\circ T'\circ \Id_{X,X'}\|\le\|G\circ i'-j'\circ T'\|\cdot\|\Id_{X,X'}\|\le\e'(1+\nu)<\e.$$
\end{proof}

\section{Proof of Theorem~\ref{t:main}}

First we show that the set $\mathcal G(\G)$ of \Gurarii operators is dense in the space $\mathcal B(\G)$.

 Fix any nonexpansive operator $T\in \mathcal B(\G)$ and any neighborhood $O_T$ of $T$ in $\mathcal B(\G)$. By the definition of the strong operator topology, there exist $\e>0$ and a finite-dimensional subspace $E\subseteq \G$ such that $\{S\in\mathcal B(\G):\|(S-T){\restriction}_E\|\le 2\e\}\subseteq O_T$. Consider the finite-dimensional Banach space $F=T[E]\subseteq \G$. Let $\Omega:\IG\to\IG$ be the \Gurarii operator, constructed in \cite{GK}. By \cite[3.3]{GK}, the operator $\Omega$ is universal. So, there exist isometric embeddings $i:E\to \G$ and $j:F\to \G$ such that $\Omega\circ i=j\circ T{\restriction}_E$.By Lemma~\ref{l:KS}, for the isometric embeddings $i:E\to \G$ and $j:F\to\G$ there exist bijective isometries $I,J\in\Iso(\G)$ such that $\|i-I{\restriction}_E\|<\e$ and $\|j-J{\restriction}_F\|<\e$.  
  Then $S=J^{-1}\circ \Omega\circ I$ is a \Gurarii operator such that for every $x\in E$ we have
$$
\begin{aligned}
&\|S(x)-T(x)\|=\|J^{-1}\circ \Omega\circ I(x)-T(x)\|\le\\
&\le \|J^{-1}\circ\Omega\circ I(x)-J^{-1}\circ\Omega\circ i(x)\|+\|J^{-1}\circ \Omega\circ i(x)-T(x)\|\le\\
&\le \|J^{-1}\|\cdot\|\Omega\|\cdot\|I(x)-i(x)\|+\|J^{-1}\circ j\circ T(x)-T(x)\|=\\
&=\|I(x)-i(x)\|+\|j\circ T(x)-J\circ T(x)\|\le \|I{\restriction}_E-i\|\cdot\|x\|+\|j-J{\restriction}_F\|\cdot\|T\|\cdot\|x\|\le \\
&\le\e\|x\|+\e\|x\|=2\e\|x\|
\end{aligned}
$$and hence $\|(S-T){\restriction}_E\|\le 2\e$. The choice of $\e$ ensures that $S\in O_T\cap \mathcal G(\G)$, witnessing that the subspace $\mathcal G(\G)$ is dense in $\mathcal B(\G)$.

Now we shall prove that the dense set $\mathcal G(\G)$ is of type $G_\delta$ in $\mathcal B(\G)$. Fix any countable dense set $D$ in the \Gurarii space $\G$. For every $9$-tuple $$t=(\e,X,Y,T,X_0,Y_0,T_0,i_0,j_0)\in\F_D$$ and the number $\delta_t=\delta(\e,\dim(X_0),\dim(Y_0),\|T_0^\leftarrow\|)$, consider the set
$$\F_t=\{G\in\mathcal B(\G):\|G\circ i_0-j_0\circ T_0\|\ge\delta_t\}$$ and the set $\U_t$ of all operators $G\in\mathcal B(\G)$ such that $\|G\circ i_0-j_0\circ T_{0}\|<\delta_t$ and there exist $\e$-isometric embeddings $i:X\to\G$ and $j:Y\to\G$ such that $$\max\{\|i{\restriction}_{X_0}-i_0\|,\|j{\restriction}_{Y_0}-j_0\|,\|G\circ i-j\circ T\|\}<\e.$$
It is easy to see that the set $\F_t$ is closed in $\mathcal B(\G)$ and the set 
$\U_t$ is open in $\mathcal B(\G)$. Then their union $(\F_t\cup\U_t)$ is a $G_\delta$-set in $\mathcal B(\G)$. Theorem~\ref{t:GC} ensures that $\mathcal G(\G)=\bigcap_{t\in\F_D}(\F_t\cup\U_t)$ and hence $\mathcal G(\G)$ is a dense $G_\delta$-set in $\mathcal B(\G)$.

\end{document}